\newcommand{\mg}{\mathfrak g }
\newcommand{\mn}{\mathfrak n }
\newcommand{\mz}{\mathfrak z }
\newcommand{\mv}{\mathfrak v }
\newcommand{\mh}{\mathfrak h }
\newcommand{\so}{\mathfrak{so} }
\newcommand{\bil}{g}
\newcommand{\lela}{ g(}
\newcommand{\rira}{)}
\newcommand{\lra}{\longrightarrow}
\renewcommand{\Im}{\rm Im\ }
\newcommand{\R}{\mathbb R}
\DeclareMathOperator{\End}{End}
\DeclareMathOperator{\Der}{Der}
\DeclareMathOperator{\ad}{ad}
\numberwithin{equation}{section}
 \newtheorem{teo}{Theorem}[section]
 \newtheorem{pro}[teo]{Proposition}
 \newtheorem{cor}[teo]{Corollary}
 \newtheorem{lm}[teo]{Lemma}
 \newtheorem{defi}[teo]{Definition}
 \theoremstyle{definition}
 \newtheorem{ex}[teo]{Example}
 \newtheorem{remark}[teo]{Remark}
\newcommand{\nc}{\newcommand}
\nc{\Iso}{\operatorname{Iso}}
 \nc{\iso}{\mathfrak{iso}}
 \nc{\sso}{\mathfrak{so}}
\nc{\Ad}{\operatorname{Ad}} 
\nc{\Sym}{\mathrm{Sym}}
 \nc{\pr}{\operatorname{pr}} 
  \nc{\dd}{\mathrm{d}} 
 \nc{\Dera}{\operatorname{Dera}} \nc{\Auto}{\operatorname{Auto}}
\begin{document}

\title{Higher degree Killing forms on $2-$step nilmanifolds}
\author{Viviana del Barco}
\address{Université Paris-Saclay, CNRS, Laboratoire de mathématiques d'Orsay, 91405, Orsay, France and Universidad Nacional de Rosario, CONICET, 2000, Rosario, Argentina}
\email{viviana.del-barco@math.u-psud.fr}

\author{Andrei Moroianu}
\address{Université Paris-Saclay, CNRS,  Laboratoire de mathématiques d'Orsay, 91405, Orsay, France}
\email{andrei.moroianu@math.cnrs.fr}

\begin{abstract} We study left-invariant Killing forms of arbitrary degree on simply connected $2-$step nilpotent Lie groups endowed with left-invariant Riemannian metrics, and classify them when the center of the group is at most two-dimensional. 
\end{abstract}

\subjclass[2010]{53D25, 22E25, 53C30} 
\keywords{Killing forms, $2-$step nilpotent Lie groups.}
\maketitle

\section{Introduction}

Killing forms on Riemannian manifolds are differential forms whose covariant deri\-vative with respect to the Levi-Civita connection is totally skew-symmetric \cite{Se03}. They generalize to higher degrees the concept of Killing vector fields (or infinitesimal isometries). Examples of Riemannian manifolds with non-parallel Killing $k-$forms are quite rare for $k\ge2$ \cite{lau,bms,au,Se03,s,y75}. 

Motivated by this fact, we have started in \cite{dBM2} a systematic study of left-invariant Killing $k-$forms on simply connected $2-$step nilpotent Lie groups  endowed with left-invariant Riemannian metrics, which will be referred to as nilmanifolds in this paper. We have shown that when $k=2$ or $k=3$, we can restrict ourselves to the de Rham irreducible case, for which Killing $2-$forms exist if and only if the Lie algebra of the group has a bi-invariant orthogonal complex structure, and Killing $3-$forms exist if and only if the Riemannian Lie group is naturally reductive, according to the  characterization given by C. Gordon \cite{GO2}.

In the present paper we continue this study and generalize the results from \cite{dBM2} in several directions, as well as those regarding the Riemannian structure of nilpotent Lie groups given in \cite{dBM}. In Theorem \ref{teo:dR} we show that the de Rham decomposition of nilmanifolds is closely related to the decomposition of the underlying metric Lie algebra in an orthogonal direct sum of ideals. This generalizes to any nilpotency degree the results obtained in \cite[Appendix A]{dBM} for $2-$step nilpotent Lie groups.

Next, in Proposition \ref{p32} we show that every left-invariant Killing form on a product of Riemannian Lie groups is a sum of Killing forms on the factors and a parallel form. This extends to forms of arbitrary degree and to arbitrary Lie groups our previous results \cite[Propositions 4.4 and 5.6]{dBM2}, holding for $2-$ and $3-$ forms on $2-$step nilpotent Lie groups.

In the nilpotent case, we show that the only parallel forms on a nilmanifold are linear combinations of wedge products of volume forms of some of the irreducible de Rham factors and of any left-invariant form on the flat factor (Corollary \ref{deK}). Therefore,  in order to understand left-invariant Killing forms on nilmanifolds, it is enough to study the de Rham irreducible case.

The algebraic system which translates the Killing condition at the Lie algebra level is very involved for $k\ge 4$, even on $2-$step nilmanifolds. We solved the problem completely only when $(N,g)$ is a 2-step nilpotent Lie group with left-invariant metric, whose Lie algebra $\mn$ has at most two-dimensional center. 

More precisely, when $\mn$ is non-abelian and its center is one-dimensional, $\mn$ is isomorphic to the Heisenberg Lie algebra $\mh_{2n+1}$ and its metric structure can be described by an invertible matrix in $\mathfrak{so}(2n)$. We show in Theorem \ref{aa} below that for any such matrix, the space of left-invariant Killing $k-$forms on the Heisenberg group is one-dimensional for every $k$ odd, and zero for $k$ even. The proof relies on a result of linear algebra of independent interest, which states that if $\omega$ is a non-degenerate 2-form on a finite-dimensional vector space $V$, then the only exterior forms $\gamma$ on $V$ satisfying 
$$(x\lrcorner\ \omega)\wedge (x\lrcorner \ \gamma)=0,\quad \mbox{ for every  }x\in V$$
are the constant multiples of the exterior powers of $\omega$.

Finally, in Theorem \ref{k} we show that if $(\mn,g)$ is an irreducible $2-$step nilpotent metric Lie algebra with two-dimensional center, then every left-invariant Killing $k-$form on the associated simply connected Riemannian Lie group $(N,g)$ vanishes if $4\le k\le \dim(\mn)-1$. The proof relies again on an argument of linear algebra (Proposition \ref{p63} below), whose proof, however, is rather involved. We suspect that when the dimension of the center increases, the corresponding linear algebra statements become intractable with standard methods.

\section{Riemannian geometry of Lie groups with left-invariant metrics}

Let $N$ be a connected Lie group endowed with a left-invariant Riemannian metric $g$, and let $\mn$ denote the Lie algebra of $N$, which we identify with the tangent space $T_eN$ of $N$ at the identity $e$. Left translations by elements of the Lie group are isometries, so the metric $g$ is determined by its value on $\mn$, which will also be denoted by $g$. 

Let $\nabla$ denote the Levi-Civita connection of $(N,g)$. Koszul's formula evaluated on left-invariant vector fields $X,Y,Z$ on $N$ reads
\begin{equation}\label{eq.Koszul}
\lela \nabla_X Y,Z\rira=\frac12\{ \lela [X,Y],Z\rira+\lela [Z,X],Y\rira+\lela [Z,Y],X\rira\}.
\end{equation}

This formula shows in particular that the covariant derivative of a left-invariant vector field with respect to another left-invariant vector field is again left-invariant. This enables one to define a linear map $\nabla:\mn\to\End(\mn)$ by $\nabla_xy:=(\nabla_XY)_e$, where $X$ and $Y$ are the left-invariant vector fields on $N$ whose values at $e$ are $x$ and $y$ respectively.

From now on we will identify a left-invariant vector field $X$ with its value $x\in\mn$ at the identity, so that \eqref{eq.Koszul} becomes 
\begin{equation}\label{eq.Koszul1}
\nabla_xy=\frac12\left( [x,y]-\ad_x^*y-\ad_y^*x\right),
\end{equation}
where $\ad_x^*$ denotes the adjoint of $\ad_x$ with respect to the inner product $g$ on $\mn$.

The center and the commutator of a Lie algebra $\mn$ are, respectively,
\[\mz=\{z\in\mn\ |\   [x,z]=0, \,\mbox{for all }x\in\mn\},\qquad
\mn'=[\mn,\mn]:=\mathrm{span}\{ [x,y]\ |\   x,y\in\mn\}.
\]

Assume now that $N$ is simply connected and let 
$$(N,\bil)=(N_0,\bil_0)\times (N_1,\bil_1)\times (N_2,\bil_2)\times \ldots \times (N_q,\bil_q),$$ 
be its de Rham decomposition through the identity $e\in N$, where $(N_0,\bil_0)$ is the Euclidean factor (possibly trivial) and $(N_i,\bil_i)$ are irreducible Riemannian manifolds for $1\le i\le q$. Each $N_i$ determines a subspace $\mn_i$ of the Lie algebra $\mn$ of $N$,  namely $\mn_i:=T_eN_i$. Then we have $\mn=\mn_0\oplus \mn_1\oplus \ldots \oplus\mn_q$ as an orthogonal direct sum of vector spaces.

\begin{pro}\label{pro:suba}
For each $i=0,\ldots, q$, $\mn_i$ is a subalgebra of $\mn$ and $\mn_0$ is abelian. Moreover, any direct sum $\bigoplus_{j=1}^s \mn_{i_j}$, with $i_j\in \{0, \ldots, q\}$ is also a subalgebra of $\mn$.
\end{pro}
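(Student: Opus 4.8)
The plan is to reduce all three assertions to a single invariance property of the Levi-Civita endomorphisms $\nabla_x\in\End(\mn)$, namely
\begin{equation*}
\nabla_x\mn_i\subseteq\mn_i\qquad\text{for all }x\in\mn\text{ and all }i=0,\dots,q.\tag{$*$}
\end{equation*}
To establish $(*)$ I would first identify the parallel distribution $D_i\subseteq TN$ whose leaf through $e$ is the de Rham factor $N_i$ with the left-invariant distribution generated by $\mn_i=T_eN_i$. Each left translation $L_g$ is an isometry of $(N,g)$, so the map $g\mapsto L_g$ sends $N$ into the identity component $\Isom_0(N,g)$; since the de Rham decomposition is unique up to order and isometry, $\Isom_0$ preserves each foliation $D_i$ individually (a permutation of the factors would depend continuously on $g$ over the connected group $N$, hence be trivial). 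Thus $dL_g(D_i)_p=(D_i)_{gp}$, and evaluating at $p=e$ gives $(D_i)_g=dL_g(\mn_i)$; that is, $D_i$ is precisely the left-invariant subbundle with fibre $\mn_i$. As $D_i$ is parallel, $\nabla_X\Gamma(D_i)\subseteq\Gamma(D_i)$ for every vector field $X$, and restricting to left-invariant fields yields $(*)$.

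Granting $(*)$, the subalgebra statements are purely formal. Since $\nabla$ is torsion-free, Koszul's formula \eqref{eq.Koszul1} gives $[y,z]=\nabla_yz-\nabla_zy$; for $y\in\mn_i$ and $z\in\mn_k$ the invariance $(*)$ forces $\nabla_yz\in\mn_k$ and $\nabla_zy\in\mn_i$, so that
\begin{equation*}
[\mn_i,\mn_k]\subseteq\mn_i\oplus\mn_k\qquad\text{for all }i,k.
\end{equation*}
Taking $i=k$ shows each $\mn_i$ is a subalgebra, and the same inclusion shows that any direct sum $\bigoplus_{j=1}^s\mn_{i_j}$ is closed under the bracket, hence a subalgebra.

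For the abelianness of $\mn_0$ I would use that the Euclidean factor is flat. As $D_0$ is parallel, $N_0$ is totally geodesic, so the metric induced on it has vanishing curvature; and because $\mn_0$ is a subalgebra, the leaf $N_0$ is the connected subgroup integrating $\mn_0$ and this induced metric coincides with its left-invariant metric. Hence $(N_0,g_0)$ is a simply connected Lie group with a flat left-invariant metric. By Milnor's structure theorem its Lie algebra splits orthogonally as $\mn_0=\mathfrak b\oplus\mathfrak u$ with $\mathfrak u$ an abelian ideal, $\mathfrak b$ abelian, and $\ad_b$ skew-symmetric for every $b\in\mathfrak b$. Since $\mn_0$ is a subalgebra of the nilpotent algebra $\mn$, each $\ad_b$ is also nilpotent; a skew-symmetric nilpotent endomorphism vanishes, so $\ad_b=0$ for all $b$ and $\mn_0$ is abelian.

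I expect the only genuinely delicate point to be the identification $(*)$ of the parallel de Rham distributions with the left-invariant ones, as everything else is formal once $(*)$ is in hand. It is worth noting that the abelian statement really does use the ambient nilpotency of $\mn$: a flat left-invariant metric by itself does not force commutativity, as the flat non-abelian group $\widetilde{E}(2)$ shows, so nilpotency must be invoked precisely at this step.
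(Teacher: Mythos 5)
Your proof of the subalgebra statements follows essentially the same route as the paper's: both arguments rest on the fact that left translations lie in the identity component of the isometry group and therefore preserve each parallel de Rham distribution $D_i$ individually (the paper cites Kobayashi--Nomizu, Ch.~VI, Thm.~3.5 for this), so that $D_i$ is the left-invariant distribution with fibre $\mn_i$. The paper then concludes directly from involutivity of $TN_i$ that $[X,Y]\in TN_i$ for left-invariant $X,Y$ tangent to $N_i$; you instead pass through the $\nabla$-invariance $(*)$ and torsion-freeness to get $[\mn_i,\mn_k]\subseteq\mn_i\oplus\mn_k$. These are equivalent, and your version has the small advantage of handling the arbitrary direct sums in one stroke rather than by a repeated ``similarly''.

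The genuine difference is the abelianness of $\mn_0$: the paper's proof is silent on this point, whereas you supply an actual argument (the totally geodesic flat leaf $N_0$ is the integral subgroup of $\mn_0$ carrying the restricted left-invariant metric; Milnor's structure theorem for flat left-invariant metrics gives $\mn_0=\mathfrak b\oplus\mathfrak u$ with $\ad_b$ skew-symmetric; and skew-symmetric plus nilpotent forces $\ad_b=0$). This argument is correct, but be aware that it invokes the nilpotency of $\mn$, which is not among the hypotheses of the proposition as stated --- nilpotency only officially enters at Proposition \ref{pro:idn}. As you yourself note with $\widetilde{E}(2)$, the claim that $\mn_0$ is abelian is in fact false for general Lie groups with left-invariant metrics, so this is a defect of the statement (and an omission in the paper's proof) rather than of your argument; the claim is only ever used in the nilpotent setting of Theorem \ref{teo:dR}, where your reasoning applies and fills a real gap.
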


\begin{proof}
Let $f$ be an isometry in the connected component of the identity of the isometry group of $(N,\bil)$. Then $f$ preserves the parallel distributions determining $N_i$ for each $i=0, \ldots, q$, that is, $\mathrm{d}f_e(T_eN_i)=T_{f(e)}N_i$ \cite[Theorem 3.5, Ch. VI]{KO-NO}. 

Taking $f=L_a$ to be a left-translation, we get
\begin{equation}\label{eq:Ti}
T_aN_i=\mathrm{d}(L_a)_e\mn_i,\quad \mbox{ for every }a\in N, \; i=0, \ldots, q.
\end{equation}

Let $x,y\in \mn_i$ and denote by $X,Y$ the left-invariant vector fields on $N$ that they induce. Then by \eqref{eq:Ti} $X,Y\in TN_i$ and thus $[X,Y]\in TN_i$. Hence $[X,Y]_e\in \mn_i$ and $\mn_i$ is a subalgebra. Similarly, we obtain the result for an arbitrary sum of  $\mn_i$.
\end{proof}

Notice that in general it is not true that the sum of two subalgebras of a Lie algebra is again a Lie subalgebra.

\begin{pro}\label{pro:idn}
If the Lie group $N$ is nilpotent, then $\mn_i$ is an ideal of $\mn$ for every $i=0,\ldots, q$.
\end{pro}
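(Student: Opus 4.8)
The plan is to prove the stronger (and, given the orthogonal splitting, equivalent) statement that $[\mn_i,\mn_l]=0$ whenever $i\ne l$. Indeed, if this holds then $[\mn,\mn_i]=\sum_{l}[\mn_l,\mn_i]=[\mn_i,\mn_i]\subseteq\mn_i$, the last inclusion because $\mn_i$ is a subalgebra by Proposition \ref{pro:suba}; so $\mn_i$ is an ideal. Conversely, if all $\mn_i$ are ideals then $[\mn_i,\mn_l]\subseteq\mn_i\cap\mn_l=0$ for $i\ne l$. The two ingredients I would use are: (i) for every $x\in\mn$ the endomorphism $\nabla_x$ is skew-symmetric with respect to $g$ (this follows from Koszul's formula \eqref{eq.Koszul} by adding the expressions for $\langle\nabla_xy,z\rangle$ and $\langle\nabla_xz,y\rangle$, or simply from metric compatibility); and (ii) each $\mn_i$ is $\nabla_x$-invariant, i.e. $\nabla_x\mn_i\subseteq\mn_i$ for all $x$. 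Property (ii) is exactly the statement that the distributions tangent to the de Rham factors are parallel: a left-invariant field with value in $\mn_i$ is a section of the parallel distribution $\mathcal D_i$ with $(\mathcal D_i)_a=T_aN_i$ by \eqref{eq:Ti}, so its covariant derivative stays in $\mathcal D_i$, and evaluating at $e$ gives $\nabla_x\mn_i\subseteq\mn_i$.

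Let $p_i\colon\mn\to\mn_i$ denote the orthogonal projections. The crucial observation is that for $z\in\mn_l$ the compression $\phi_z:=p_i\circ\ad_z|_{\mn_i}\in\End(\mn_i)$ coincides with $\nabla_z|_{\mn_i}$, and is therefore skew-symmetric on $\mn_i$. To see this I would write, for $a\in\mn_i$, the torsion-free identity $[z,a]=\nabla_za-\nabla_az$; by (ii) we have $\nabla_za\in\mn_i$ and $\nabla_az\in\mn_l$, so $p_i[z,a]=\nabla_za$. Since $\nabla_z$ is skew-symmetric by (i) and preserves $\mn_i$, its restriction to $\mn_i$ lies in $\mathfrak{so}(\mn_i)$, whence $\phi_z\in\mathfrak{so}(\mn_i)$.

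Now nilpotency enters. By Proposition \ref{pro:suba} the subspace $\mn_i\oplus\mn_l$ is a subalgebra, so it is $\ad_z$-invariant; moreover $\ad_z\mn_l\subseteq\mn_l$ because $\mn_l$ is a subalgebra and $z\in\mn_l$. Hence, in block form with respect to $\mn_i\oplus\mn_l$, the operator $\ad_z|_{\mn_i\oplus\mn_l}$ is lower triangular with diagonal blocks $\phi_z$ (on $\mn_i$) and $\ad_z|_{\mn_l}$ (on $\mn_l$). Since $\mn$ is nilpotent, $\ad_z$ is a nilpotent endomorphism, so its characteristic polynomial on the invariant subspace $\mn_i\oplus\mn_l$ is a power of the variable; by triangularity the same is true for the diagonal block $\phi_z$, i.e. $\phi_z$ is nilpotent. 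A skew-symmetric endomorphism that is nilpotent must vanish (for instance $\|\phi_z\|^2=\tr(\phi_z^{\top}\phi_z)=-\tr(\phi_z^2)=0$), so $\phi_z=0$; equivalently $p_i[z,\mn_i]=0$, and since $[z,\mn_i]\subseteq\mn_i\oplus\mn_l$ this gives $[\mn_l,\mn_i]\subseteq\mn_l$. Exchanging the roles of $i$ and $l$ (now using that $\mn_i$ is a subalgebra to get the triangular block) yields $[\mn_i,\mn_l]\subseteq\mn_i$ as well. As $[\mn_i,\mn_l]=[\mn_l,\mn_i]$ as subspaces, we conclude $[\mn_i,\mn_l]\subseteq\mn_i\cap\mn_l=0$, which finishes the proof.

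The step I expect to be the only real obstacle is making $\phi_z$ nilpotent: a compression $p_i\,\ad_z|_{\mn_i}$ of a nilpotent operator is not nilpotent in general, and this is exactly where the hypothesis is used. The point is that the companion factor $\mn_l$ is itself a subalgebra, which forces the relevant off-diagonal block to vanish and makes $\ad_z$ block triangular on $\mn_i\oplus\mn_l$; combined with the skew-symmetry coming from the parallel de Rham distributions, nilpotency of $\phi_z$ then forces $\phi_z=0$. Without nilpotency the argument breaks down, consistent with the fact that de Rham factors of general left-invariant metrics need not be ideals.
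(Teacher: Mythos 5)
Your proof is correct and follows essentially the same route as the paper's: the compression of $\ad_z$ onto a de Rham factor is skew-symmetric (because the de Rham distributions are parallel) and, by nilpotency together with the block-triangular form of $\ad_z$ coming from Proposition \ref{pro:suba}, also nilpotent, hence zero. The only organizational difference is that you argue pairwise on $\mn_i\oplus\mn_l$, whereas the paper splits off $\mh_2=\mn_q$ from $\mh_1=\mn_0\oplus\cdots\oplus\mn_{q-1}$ and inducts; your identification $p_i\circ\ad_z|_{\mn_i}=\nabla_z|_{\mn_i}$ is the same computation the paper carries out directly with Koszul's formula.
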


\begin{proof}
For $q=0$ there is nothing to show. Assume $q\ge 1$ and consider the orthogonal decomposition of $\mn$ in a sum of subalgebras $\mn=\mh_1\oplus  \mh_2$, where $\mh_1:=\mn_0\oplus \mn_1\oplus \ldots\oplus \mn_{q-1}$ and $\mh_2:=\mn_q$. 

From the definition of $\mn_i$, the Levi-Civita connection $\nabla$ preserves each $\mn_i$, that is, $\nabla \mn_i\subset \mn_i$, for every $i=0, \ldots, q$, so in particular $\nabla \mh_i\subset \mh_i$, for $i=1,2$.

Let $x,y\in \mh_1$ and $z\in \mh_2$. Then by \eqref{eq.Koszul}, 
$$\lela [z,x],y\rira +\lela [z,y],x\rira=2\lela \nabla_x y,z\rira-\lela [x,y],z\rira=0,  $$ since $\nabla_x y$ and $[x,y]$ are in $\mh_1$, which is orthogonal to $\mh_2$. Therefore, in an orthonormal basis adapted to the decomposition $\mh_1\oplus \mh_2$, $\ad_z$  has the following form:
$$\ad_z=\left(\begin{matrix}A&0\\\star &B\end{matrix}\right),$$
where $A$ is skew-symmetric.
Since $N$ is nilpotent, $\ad_z$ is a nilpotent endomorphism of $\mn$ for every $z\in\mn$, so $A$ and $B$ are both nilpotent matrices. Hence $A=0$ and, since $z$ is arbitrary in $\mh_2$, $[\mh_1,\mh_2]\subset \mh_2$. A similar argument gives $[\mh_1,\mh_2]\subset \mh_1$ so finally we obtain $[\mh_1,\mh_2]=0$.

Thus $\mn_q$ and $\mn_0\oplus \mn_1\oplus \ldots\oplus \mn_{q-1}$ are ideals of $\mn$, and the statement follows by immediate induction.
\end{proof}

\begin{ex}
If the Lie group $N$ is not assumed to be nilpotent, the tangent spaces at the identity of the de Rham factors are not ideals of the Lie algebra of $N$ in general. For example, let $(\mh,\bil)$ be a de Rham irreducible metric Lie algebra and let $A\in \so(\mh)\cap \Der(\mh)$ be a skew-symmetric derivation. Consider the Lie algebra $\mn:=\R\xi \ltimes_A \mh$ together with an inner product extending $\bil$ and such that $\xi\bot \mh$. From Koszul's formula \eqref{eq.Koszul}, we obtain that $\nabla_\xi \xi=0$ and $\nabla\mh\subset \mh$. Nevertheless, $\R\xi$ is not an ideal of $\mn$ if $A\neq 0$.
\end{ex}

A Lie algebra endowed with an inner product $(\mn,\bil)$ is called {\em reducible} if it can be written as an orthogonal direct sum of ideals $\mn=\mn_1\oplus\mn_2$. In this case we endow $\mn_i$ with the inner product $\bil_i$ which is the restriction of $\bil$ to $\mn_i$, for each $i=1,2$. Otherwise, $(\mn,g)$ is called {\em irreducible}. Proposition \ref{pro:idn} states that simply connected de Rham irreducible nilpotent Lie groups are in correspondence with irreducible nilpotent metric Lie algebras.

Summarizing, we get:

\begin{teo}\label{teo:dR}
Let $(N,\bil)$ be a connected and simply connected nilpotent Lie group endowed  with a left-invariant Riemannian metric, and consider its de Rham decomposition
$$(N,\bil)=(N_0,\bil_0)\times (N_1,\bil_1)\times (N_2,\bil_2)\times \ldots \times (N_q,\bil_q).$$
 Then each $(N_i,\bil_i)$, with $i=1,\ldots, q$, is (isometric to) a connected, simply connected irreducible nilpotent Lie group endowed with a left-invariant metric. In particular, the Lie algebra $\mn$  of $N$ is a direct sum of orthogonal ideals 
 $$(\mn,\bil) =(\mn_0,\bil_0)\oplus \bigoplus_{i=1}^q (\mn_i,\bil_i), $$
 where $\mn_0$ is abelian and $\mn_i$ is nilpotent, non-abelian and irreducible for $i=1, \ldots, q$.
\end{teo}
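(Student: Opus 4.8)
The statement is essentially a synthesis of Propositions \ref{pro:suba} and \ref{pro:idn} with the standard dictionary between orthogonal ideal decompositions of a metric Lie algebra and Riemannian product decompositions of the associated simply connected Lie group; the plan is to first establish the algebraic decomposition, then to identify each de Rham factor with a Lie subgroup, and finally to transfer irreducibility from the geometric to the algebraic side. For the decomposition, Proposition \ref{pro:idn} shows each $\mn_i$ is an ideal, the de Rham splitting gives that $\mn=\mn_0\oplus\cdots\oplus\mn_q$ is $\bil$-orthogonal, and Proposition \ref{pro:suba} gives that $\mn_0$ is abelian. Since a direct sum of ideals forces $[\mn_i,\mn_j]\subseteq\mn_i\cap\mn_j=0$ for $i\neq j$, the $\mn_i$ pairwise commute; and each $\mn_i$, being a subalgebra of the nilpotent Lie algebra $\mn$, is itself nilpotent. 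This already yields the displayed orthogonal ideal decomposition and the nilpotency of the $\mn_i$.

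Next I would identify the de Rham factor $N_i$ through $e$ with the connected Lie subgroup $\tilde N_i\subseteq N$ integrating $\mn_i$. By \eqref{eq:Ti} the distribution tangent to $N_i$ is the left-invariant distribution generated by $\mn_i$, whose integral leaf through $e$ is exactly $\tilde N_i$, and the metric $\bil_i$ it inherits is the left-invariant metric determined by the restriction of $\bil$ to $\mn_i$. Because the ideals pairwise commute, $N$ is the Lie group direct product of the $\tilde N_i$ and $\bil$ is the corresponding product metric, so each $\tilde N_i$ (hence $N_i$) is simply connected, and it is nilpotent since $\mn_i$ is. This gives the claim that each $(N_i,\bil_i)$ is a simply connected nilpotent Lie group endowed with a left-invariant metric.

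Finally, for $i\geq 1$ I would prove irreducibility and non-abelianness, which is where the argument requires the most care. If $(\mn_i,\bil_i)$ were reducible, say $\mn_i=\ma\oplus\mb$ with $\ma,\mb$ nonzero orthogonal ideals, then the same dictionary (orthogonal commuting ideals yielding a product metric) would realize $N_i$ as a nontrivial Riemannian product, contradicting its de Rham irreducibility; hence $(\mn_i,\bil_i)$ is irreducible. If $\mn_i$ were abelian, then $\tilde N_i$ would be flat and $N_i$ a flat irreducible factor, which by the de Rham decomposition should have been absorbed into the Euclidean factor $N_0$, a contradiction for $i\geq 1$. The main obstacle is making this dictionary rigorous in the direction ``orthogonal ideals $\Rightarrow$ Riemannian product'' and then invoking the uniqueness of the de Rham decomposition of the complete, simply connected $N$, since it is precisely this that forces the algebraic and geometric decompositions to coincide and allows irreducibility to be transferred between the two settings.
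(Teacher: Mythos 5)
Your argument is correct and follows the paper's route: the paper obtains Theorem \ref{teo:dR} as a direct consequence of Propositions \ref{pro:suba} and \ref{pro:idn} together with the same standard dictionary between orthogonal commuting ideals and Riemannian product decompositions that you spell out. The one step you flag as a possible obstacle --- that orthogonal commuting ideals yield a Riemannian product --- is immediate from Koszul's formula \eqref{eq.Koszul}, which shows that each ideal spans a $\nabla$-parallel distribution, so the uniqueness part of the de Rham theorem identifies the algebraic and geometric splittings and transfers irreducibility and non-flatness exactly as you describe.
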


This result (which is a direct consequence of Propositions \ref{pro:suba} and \ref{pro:idn}) generalizes Corollary A.4 in \cite{dBM} (proved for 2-step nilpotent Lie groups) to nilpotent Lie groups of any nilpotency degree.

\section{Left-invariant Killing forms on Lie groups}

In this section we introduce the concept of Killing forms on Riemannian manifolds. We focus on such forms defined on Lie groups endowed with left-invariant metrics, and we study their behavior with respect to the de Rham decomposition. 

\begin{defi}
A Killing $k-$form on a Riemannian manifold $(M,g)$ is a differential $k-$form $\alpha$ that satisfies 
\begin{equation}\label{eq:Killform}
\nabla_X \alpha=\frac1{k+1}X\lrcorner\ \mathrm{d}\alpha
\end{equation}
for every vector field $X$ in $M$, where $\lrcorner$ denotes the contraction (or interior product) of differential forms by vector fields.	
\end{defi}
Equivalently, $\alpha$ is a Killing form if and only if  $X\lrcorner\  \nabla_X\alpha=0$ for every vector field $X$ (see \cite{Se03}). If $\xi$ is a vector field on $M$ and $\alpha$ is its metric dual $1-$form, i.e. $\alpha=\lela \xi, \cdot\rira$, then $\alpha$ is a Killing $1-$form if and only if $\xi$ is a Killing vector field. 
\smallskip

If $N$ is a Lie group with Lie algebra $\mn$ and $g$ is a left-invariant metric on $N$, every left-invariant differential $k-$form  $\alpha$ on $N$ is determined by its value at the identity; hence one can identify  left-invariant $k-$forms on $N$ with elements in $\Lambda^k\mn^*$. The covariant and exterior derivatives preserve left-invariance, thus $\alpha\in \Lambda^k\mn^*$ defines a left-invariant Killing form on $N$ if and only if 
\begin{equation}\label{eq:Killforminv}
\nabla_y \alpha=\frac1{k+1}\ y\lrcorner\ \mathrm{d}\alpha, \quad \mbox{ for all } y\in \mn,
\end{equation}
where $\mathrm{d}:\Lambda^k\mn^*\lra \Lambda^{k+1}\mn^*$ is the Lie algebra differential.  As mentioned before, \eqref{eq:Killforminv} is  equivalent to $y\lrcorner\  \nabla_y\alpha=0$ for all $y\in \mn$. 

From now on we will identify $\mn$ with $\mn^*$ using the metric and denote by $\mathcal{K}^k(\mn,g)\subset\Lambda^k\mn$ the space of Killing $k-$forms on $(\mn,g)$, i.e. elements $\alpha\in\Lambda^k\mn$ satisfying \eqref{eq:Killforminv}.

We shall first study Killing forms on Riemannian products of Lie groups endowed with left-invariant metrics. Let $(N_1,\bil_1)$, $(N_2,\bil_2)$ be Lie groups endowed with a left-invariant metric and consider $N=N_1\times N_2$ endowed with the product metric. Then $\mn=\mn_1\oplus \mn_2$ is an orthogonal direct sum of ideals.

Reducibility of $\mn$ induces a decomposition of the space of $k-$forms so that any $\alpha\in \Lambda^k\mn$ can be written as $\alpha=\sum_{l=0}^k\alpha_l$, where $\alpha_l\in \Lambda^l\mn_1 \otimes \Lambda^{k-l}\mn_2$, for $l=0, \ldots, k$.

The following is a Lie group analogue of a result proved in \cite{ms08} about the decomposition of Killing forms on a product of compact Riemannian manifolds.
\begin{pro}\label{p32}
A left invariant $k-$form $\alpha$ on $N$ is a Killing form on $(N,g)$ if and only if $\alpha_0$ and $\alpha_k$ are Killing forms on $N_2$ and $N_1$ respectively, and $\alpha_l$ is a parallel form on $N$ for each $l=1, \ldots, k-1$.
\end{pro}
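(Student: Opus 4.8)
The plan is to exploit the two gradings that $\Lambda^k\mn=\bigoplus_{l=0}^k\Lambda^l\mn_1\otimes\Lambda^{k-l}\mn_2$ carries: the internal bidegree $(l,k-l)$, and the bidegree in the arguments coming from the splitting $y=y_1+y_2$ with $y_i\in\mn_i$. First I would record that, since $\mn_1,\mn_2$ are orthogonal ideals with $[\mn_1,\mn_2]=0$, Koszul's formula \eqref{eq.Koszul1} gives $\nabla_{y_1}y_2=\nabla_{y_2}y_1=0$, so $\nabla$ preserves each factor and $N=N_1\times N_2$ is a Riemannian product. Consequently, for a bihomogeneous component $\alpha_l$ one has $\nabla_y\alpha_l=\nabla_{y_1}\alpha_l+\nabla_{y_2}\alpha_l$, where $\nabla_{y_1}$ (resp. $\nabla_{y_2}$) differentiates only the $\Lambda^l\mn_1$ (resp. $\Lambda^{k-l}\mn_2$) factor and preserves the bidegree; in particular $\nabla_{y_1}\alpha_0=0$ and $\nabla_{y_2}\alpha_k=0$, and the contraction splits as $y\lrcorner=y_1\lrcorner+y_2\lrcorner$.

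Next I would substitute $y=y_1+y_2$ into the Killing condition $y\lrcorner\nabla_y\alpha=0$ and expand. The four terms $y_1\lrcorner\nabla_{y_1}\alpha$, $y_1\lrcorner\nabla_{y_2}\alpha$, $y_2\lrcorner\nabla_{y_1}\alpha$, $y_2\lrcorner\nabla_{y_2}\alpha$ are homogeneous of multidegrees $(2,0)$, $(1,1)$, $(1,1)$, $(0,2)$ in $(y_1,y_2)$, so, the identity holding for all $y_1,y_2$, each multidegree vanishes separately; projecting further onto the internal bidegrees of $\Lambda^\bullet\mn$ yields for every $l$: (i) $y_1\lrcorner\nabla_{y_1}\alpha_l=0$, i.e. $\alpha_l$ is Killing in the $\mn_1$-direction ($l\ge1$); (ii) $y_2\lrcorner\nabla_{y_2}\alpha_l=0$, i.e. $\alpha_l$ is Killing in the $\mn_2$-direction ($l\le k-1$); and (iii) the mixed coupling $y_1\lrcorner\nabla_{y_2}\alpha_{l+1}+y_2\lrcorner\nabla_{y_1}\alpha_l=0$ for $0\le l\le k-1$. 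Condition (i) with $l=k$ says exactly that $\alpha_k$ is Killing on $N_1$, and (ii) with $l=0$ that $\alpha_0$ is Killing on $N_2$, so the extreme components are settled.

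It remains to show that the intermediate $\alpha_l$ ($1\le l\le k-1$) are parallel, and this I expect to be the main obstacle: (i)–(ii) only say each $\alpha_l$ is Killing in each factor separately, which is strictly weaker than parallel. Here I would rewrite the single-factor Killing conditions as the factor Killing equations $\nabla_{y_1}\alpha_l=\tfrac1{l+1}\,y_1\lrcorner d_1\alpha_l$, $\delta_1\alpha_l=0$ and their $\mn_2$-analogues $\nabla_{y_2}\alpha_l=\tfrac1{k-l+1}\,y_2\lrcorner d_2\alpha_l$, $\delta_2\alpha_l=0$, where $d_i,\delta_i$ are the exterior differential and codifferential of the factor $\mn_i$. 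Inserting these into the mixed relation (iii) and noting that the resulting bidegree-$(l+1,k-l)$ form is annihilated by all double contractions $y_1\lrcorner\,y_2\lrcorner$, hence vanishes, gives the proportionality $\tfrac1{l+1}d_1\alpha_l=\tfrac1{k-l}d_2\alpha_{l+1}$. Using that $\delta_i$ is the formal adjoint of $d_i$ on $\Lambda^\bullet\mn$ (valid since $\sum_i\nabla_{e_i}e_i=0$, which holds for the unimodular, in particular nilpotent, groups that are the subject of the paper) together with the commutation of $\delta_1$ and $d_2$, I would then compute
\[|d_1\alpha_l|^2=\frac{l+1}{k-l}\langle d_1\alpha_l,\,d_2\alpha_{l+1}\rangle=\frac{l+1}{k-l}\langle \alpha_l,\,d_2\delta_1\alpha_{l+1}\rangle=0,\]
since $\delta_1\alpha_{l+1}=0$ by (i); the boundary index $l=k-1$ is handled directly from (iii) using $\nabla_{y_2}\alpha_k=0$. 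Thus $d_1\alpha_l=0$, and symmetrically $d_2\alpha_l=0$, so the factor Killing equations force $\nabla_{y_1}\alpha_l=\nabla_{y_2}\alpha_l=0$, i.e. $\alpha_l$ is parallel.

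The converse is a short verification: if $\alpha_0,\alpha_k$ are factor-Killing and the middle components parallel, then in $y\lrcorner\nabla_y\alpha$ the middle terms drop out, while the $\alpha_0$- and $\alpha_k$-contributions vanish by the same bidegree bookkeeping as above. The only non-formal ingredient is the passage from ``Killing in each factor'' to ``parallel'', where the adjunction $\delta_i=d_i^*$ and the single-factor coclosedness $\delta_i\alpha_l=0$ are essential; this is precisely the algebraic counterpart of the integration-by-parts argument used in the compact product case of \cite{ms08}, and I expect it to be the crux of the proof.
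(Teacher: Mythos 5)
Your argument reproduces the paper's strategy almost step for step up to its key identity: the bidegree decomposition, the separation of $y\lrcorner\ \nabla_y\alpha=0$ by multidegree in $(y_1,y_2)$ and by internal bidegree (the paper's \eqref{xx}, \eqref{yy} and \eqref{eq:nxy}), the factor equations $\nabla_{y_1}\alpha_l=\tfrac1{l+1}\,y_1\lrcorner\ \dd_1\alpha_l$ (the paper's \eqref{dd1}), and the proportionality $\tfrac1{l+1}\dd_1\alpha_l=\tfrac1{k-l}\dd_2\alpha_{l+1}$, which is exactly \eqref{eq:d2oi} after an index shift. All of this is correct, as are your treatment of the extreme components and of the converse.

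The divergence, and the one real weakness, is at the crux, the passage from ``Killing in each factor'' to ``parallel''. You conclude via $|\dd_1\alpha_l|^2=\tfrac{l+1}{k-l}\langle \alpha_l,\dd_1^{*}\dd_2\alpha_{l+1}\rangle$ together with $\dd_1^{*}=\delta_1$ and $\delta_1\alpha_{l+1}=0$. But on the finite-dimensional space $\Lambda^{\bullet}\mn$ one has $\dd_1^{*}=\delta_1-H_1\lrcorner$ with $H_1=\sum_j\nabla_{u_j}u_j$, and $H_1=0$ exactly when $N_1$ is unimodular. You flag this, but the proposition is stated (and advertised in the introduction) for \emph{arbitrary} Lie groups, so your proof only establishes the unimodular case; that suffices for every application in the paper, but it is strictly weaker than the statement, and the leftover term $\langle H_1\wedge\alpha_l,\dd_2\alpha_{l+1}\rangle$ does not obviously vanish in general. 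The paper's own final step avoids adjunction altogether: from $\dd_1\dd_2\alpha_{l-1}=0$ (a consequence of \eqref{eq:d2oi} and $\dd_1^2=0$) it deduces $\nabla_y(\dd_2\alpha_l)=0$, hence $\nabla_y\nabla_y\alpha_l=\nabla_{\nabla_yy}\alpha_l$, and then uses the pointwise skew-symmetry of $\nabla_y$ and $\nabla_{\nabla_yy}$ acting on $\Lambda^{\bullet}\mn$ to get $|\nabla_y\alpha_l|^2=-g(\nabla_{\nabla_yy}\alpha_l,\alpha_l)=0$. This costs nothing extra and needs no unimodularity; replacing your adjunction step by this computation would make your proof cover the full statement.
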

\begin{proof} 
Let $\alpha$ be a left-invariant $k-$form on $N$ and write $\alpha=\sum_{l=0}^k \alpha_l$, with $\alpha_l\in \Lambda^l\mn_1 \otimes \Lambda^{k-l}\mn_2$. Since $\nabla \mn_1\subset \mn_1$ and $\nabla \mn_2\subset \mn_2$, we have that $\nabla_x\alpha_l\in \Lambda^l\mn_1 \otimes \Lambda^{k-l}\mn_2$ for every $x\in \mn$ and $l=0, \ldots, k$. 

Suppose now that $\alpha$ is a Killing form and let $x\in \mn_1$. Then, by \eqref{eq:Killforminv}, it satisfies
\begin{equation}
\label{eq:sum}
0=x\lrcorner\   \nabla_x\alpha=\sum_{l=0}^kx\lrcorner\  \nabla_x \alpha_l.
\end{equation}

For every $x\in\mn_1$, $x\lrcorner\ \nabla_x\alpha_l\in \Lambda^{l-1}\mn_1 \otimes \Lambda^{k-l}\mn_2$ so each term in \eqref{eq:sum} lies in a different summand of $\Lambda^k(\mn_1\oplus \mn_2)$.  Therefore 
\begin{equation}\label{xx}x\lrcorner\  \nabla_x \alpha_l=0\qquad \hbox{for every }x\in \mn_1\hbox{ and }l=0, \ldots, k.\end{equation}
Similarly, we get 
\begin{equation}\label{yy}y\lrcorner\  \nabla_y \alpha_l=0\qquad \hbox{for every }y\in \mn_2\hbox{ and }l=0, \ldots, k.
\end{equation}

In particular, $x\lrcorner\ \nabla_x\alpha_k=0$ for all $x\in \mn_1$, so $\alpha_k$, which is an element of $\Lambda^k\mn_1$, defines a Killing form on $(N_1,g_1)$. Using a similar argument we obtain that $\alpha_0$ defines a Killing form on $(N_2,g_2)$. 

Consider orthonormal bases $\{u_1, \ldots, u_n\}$ of $\mn_1$ and $\{v_1, \ldots, v_m\}$ of $\mn_2$. We define the operators $\mathrm{d}_1,\mathrm{d}_2:\Lambda^*\mn\to\Lambda^{*+1}\mn$
by
$$\mathrm{d}_1:=\sum_{j=1}^n u_j\wedge \nabla_{u_j},
\quad\mathrm{d}_2:=\sum_{j=1}^m v_j\wedge \nabla_{v_j}
.$$
Of course, these operators do not depend on the chosen orthonormal bases, and it is straightforward to check the relations 
\begin{equation} \mathrm{d}_i\mathrm{d}_j+\mathrm{d}_j\mathrm{d}_i=0,
\qquad\hbox{for all } i,j\in\{1,2\}.
\label{dd}
\end{equation}

From \eqref{xx} and \eqref{yy} we immediately get for every $x\in \mn_1,\ y\in \mn_2$ and $ l=0,\ldots, k$:
\begin{equation}\label{dd1}
\nabla_x \alpha_l=\frac1{l+1}x\lrcorner\ \dd_1\alpha_l\quad \mbox{ and } \quad \nabla_y \alpha_l=\frac1{k-l+1}y\lrcorner\ \dd_2\alpha_l.
\end{equation}

Moreover, since $\alpha$ is a Killing form, we have $ x\lrcorner\  \nabla_y\alpha +y\lrcorner\  \nabla_x\alpha=0,$ for every $x\in \mn_1$, $y\in \mn_2$. Projecting this equality onto the different summands of $\Lambda^{k-1}(\mn_1\oplus\mn_2)$ we obtain
\begin{equation}\label{eq:nxy}
 x\lrcorner\ \nabla_y\alpha_l +y\lrcorner\ \nabla_x\alpha_{l-1}=0,\quad  \mbox{for } l=0,\ldots, k
 \end{equation}
(where by convention $\alpha_{-1}=0$).
We then obtain
\begin{equation}\label{eq:dn}
y\lrcorner\ \mathrm{d}_1\alpha_{l-1}= y\lrcorner\  \left(\sum_{j=1}^n u_j\wedge \nabla_{u_j}\alpha_{l-1}\right)=\sum_{j=1}^n u_j\wedge ( u_j\lrcorner\ \nabla_y\alpha_l) =l\nabla_y\alpha_l,
\end{equation}
for every $y\in \mn_2$ and $ l=0,\ldots, k$.

Taking the wedge product with $y$ in \eqref{eq:dn} and summing over $y=v_j$ yields
\begin{equation}\label{eq:d2oi}
l\,\mathrm{d}_2\alpha_l=l\sum_{j=1}^m v_j\wedge \nabla_{v_j} \alpha_l=\sum_{j=1}^m v_j\wedge( {v_j} \lrcorner\ \mathrm{d}_1\alpha_{l-1})=(k-l+1)\mathrm{d}_1\alpha_{l-1},
\end{equation}
for every $l=0,\ldots, k$. Applying $\dd_1$ to this relation and using \eqref{dd} for $i=j=1$ we get
\begin{equation}\label{eq:d3}
\dd_1\mathrm{d}_2\alpha_l=0, \quad  \mbox{for } l=1,\ldots, k,
\end{equation}
(which by \eqref{dd} also holds for $l=0$ since $\dd_1\alpha_0=0$).
%
%


For every $y\in \mn_2$ and $l=1, \ldots, k$ we can write, using \eqref{dd1}, \eqref{eq:d2oi} and \eqref{eq:d3}:
\begin{eqnarray*}\nabla_y(\dd_2\alpha_l)&=&\frac{k-l+1}{l}\nabla_y(\dd_1\alpha_{l-1})=\frac{k-l+1}{l}\dd_1(\nabla_y\alpha_{l-1})\\
&=&\frac{k-l+1}{l(k-l+2)}\dd_1(y\lrcorner\ \dd_2\alpha_{l-1})
=-\frac{k-l+1}{l(k-l+2)}y\lrcorner\ (\dd_1\dd_2\alpha_{l-1})=0.
\end{eqnarray*}

Consequently, for every $y\in\mn_2$ and $l=1, \ldots, k$ we have
$$\nabla_y(\nabla_y\alpha_l)=\frac{1}{k-l+1}\nabla_y(y\lrcorner\ \dd_2\alpha_l)=\frac{1}{k-l+1}(\nabla_yy)\lrcorner\ \dd_2\alpha_l=\nabla_{\nabla_yy}\alpha_l.$$
Taking the scalar product with $\alpha_l$ and using the skew-symmetry of the operators $\nabla_y$ and $\nabla_{\nabla_yy}$ acting on exterior forms, we finally get
$$|\nabla_y\alpha_l|^2=-g(\nabla_y(\nabla_y\alpha_l),\alpha_l)=-g(\nabla_{\nabla_yy}\alpha_l,\alpha_l)=0,$$
so $\nabla_y\alpha_l=0$ for every $y\in \mn_2$ and $l=1, \ldots,k$. By \eqref{eq:nxy}, we also get $\nabla_x\alpha_l=0$ for every $x\in \mn_1$ and $l=0, \ldots, k-1$.

Consequently, we proved that $\nabla_z\alpha_l=0 $ for every $z\in \mn$ and every $l=1, \ldots, k-1$. Hence, $\alpha_l$ defines a parallel form on $(N,g)$, for each $l=1, \ldots, k-1$.
\end{proof}

In order to understand left-invariant Killing forms on products of nilpotent Riemannian Lie groups, it is thus necessary to study left-invariant parallel forms on such manifolds. 

Since parallel forms correspond to fixed points of the holonomy representation on the exterior bundle, it is straightforward to check that every parallel form on a product of Riemannian manifolds is a sum of wedge products of parallel forms on the factors of this product. 

The next result shows that de Rham irreducible  nilpotent Lie groups do not carry any parallel forms besides the obvious ones.

\begin{pro}\label{pro:np}
Let $(N,g)$ be a de Rham irreducible connected and simply connected (non-abelian) nilpotent Lie group endowed with a left-invariant metric. Then the only parallel differential forms on $(N,g)$ are the constants and the constant multiples of the volume form. In particular, $(N,g)$ does not admit K\"ahler structures.
\end{pro}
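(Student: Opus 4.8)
The plan is to translate the statement into holonomy theory. Since $N$ is simply connected, the holonomy principle identifies parallel $k$-forms with the elements of $\Lambda^k\mn^*$ fixed by the holonomy group $\mathrm{Hol}\subseteq\mathrm{SO}(\mn)$ acting at $e$ (the inclusion into $\mathrm{SO}$ holds because simply connected manifolds are orientable and $\mathrm{Hol}$ is connected). Thus the constant functions and the multiples of the volume form, occupying degrees $0$ and $n:=\dim\mn$, are automatically parallel, and everything reduces to showing that $\mathrm{Hol}$ fixes no nonzero element of $\Lambda^k\mn^*$ for $1\le k\le n-1$. First I would record that a non-abelian nilpotent Lie group admits no flat left-invariant metric (Milnor), so $(N,g)$ is not a Euclidean factor; being de Rham irreducible, it then has irreducible holonomy representation on $\mn$. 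This already settles degree $1$ (an invariant $1$-form would be a nonzero invariant vector, hence an invariant line) and, by Hodge duality, degree $n-1$.

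For the intermediate degrees $1<k<n-1$ I would appeal to Berger's classification. A nonzero fixed $k$-form with $0<k<n$ forces $\mathrm{Hol}\neq\mathrm{SO}(\mn)$, because the standard representation of $\mathrm{SO}(\mn)$ on $\Lambda^k\mn^*$ has no nonzero invariants in these degrees. Hence $\mathrm{Hol}$ is a proper connected subgroup of $\mathrm{SO}(\mn)$ acting irreducibly, so $(N,g)$ is either locally symmetric or has holonomy on Berger's list: $\mathrm{U}(m)$, $\mathrm{SU}(m)$, $\mathrm{Sp}(k)$, $\mathrm{Sp}(k)\mathrm{Sp}(1)$, $G_2$ or $\mathrm{Spin}(7)$.

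The next step discards every candidate except the generic K\"ahler holonomy $\mathrm{U}(m)$ through a single curvature obstruction. Each remaining possibility forces $(N,g)$ to be Einstein: the locally symmetric metrics and $\mathrm{Sp}(k)\mathrm{Sp}(1)$ are Einstein, while $\mathrm{SU}(m)$, $\mathrm{Sp}(k)$, $G_2$ and $\mathrm{Spin}(7)$ are Ricci-flat. But a non-abelian nilmanifold is never Einstein: by Milnor's curvature formulas its Ricci tensor is strictly negative in some direction and strictly positive in another, so it can be neither Ricci-flat nor Einstein with nonzero constant. This eliminates all the Einstein cases at once.

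The surviving case $\mathrm{Hol}=\mathrm{U}(m)$, in which $n=2m$ and $(N,g)$ is K\"ahler for a parallel complex structure $J$, is the main obstacle. If the inclusion $\mathrm{Hol}\subseteq\mathrm{U}(m)$ were proper, $(N,g)$ would again be Einstein and hence excluded; so $\mathrm{Hol}=\mathrm{U}(m)$ exactly, and the compatible parallel complex structure is unique up to sign. Since the left translations are isometries and $N$ is connected, they must preserve $J$, so $J$ is left-invariant and $(\mn,g,J)$ is a left-invariant K\"ahler structure on a non-abelian nilpotent Lie group. No such structure exists: this is the classical theorem that a nilpotent Lie group carrying a left-invariant K\"ahler metric is abelian (Benson--Gordon), which may also be proved directly on $\mn$ by combining $\nabla J=0$ with the Koszul formula \eqref{eq.Koszul1} and the nilpotency of the operators $\ad_x$. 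Ruling out $\mathrm{U}(m)$ leaves $\mathrm{Hol}=\mathrm{SO}(\mn)$, which fixes no form in degrees $1,\dots,n-1$, proving the proposition; the absence of K\"ahler structures then follows a posteriori, since any K\"ahler structure would supply a parallel $2$-form.
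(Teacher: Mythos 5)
Your proof is correct and follows essentially the same route as the paper's: Milnor's result that non-abelian nilmanifolds are never Einstein, combined with the Berger--Simons holonomy classification, reduces everything to the K\"ahler case with holonomy exactly $\mathrm{U}(m)$; the parallel complex structure is then shown to be left-invariant using that left translations are isometries and $N$ is connected, and the non-existence of left-invariant K\"ahler structures on non-abelian nilpotent Lie groups gives the contradiction. The only differences are cosmetic: you run through Berger's list case by case where the paper invokes Berger--Simons in one stroke, and you argue with $J$ where the paper argues with the K\"ahler form $\omega$ and the volume form.
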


\begin{proof}
First, recall that nilpotent Lie groups endowed with left-invariant metrics are never Einstein \cite{Mil76}.
In particular, $(N,\bil)$ is not an irreducible symmetric space. 

If $(N,g)$ carries a non-zero left-invariant parallel form of degree $k$ with $0<k<\dim(N)$, then it has special holonomy. Indeed, for $1\le k\le n-1$, the representation of ${\rm SO}(n)$ in $\Lambda^k\mathbb{R}^n$ has no fixed points. Since $N$ is not Einstein, the Berger-Simons holonomy theorem implies that $(N,\bil)$ is a K\"ahler manifold with holonomy ${\rm U}(m)$, where $m=\frac12\dim(N)$ \cite[Ch. 10]{Be86}.	

We shall prove that the K\"ahler form $\omega$ of $(N,g)$ is necessarily left-invariant. This will lead us to a contradiction, since nilpotent Lie groups endowed with left-invariant metrics do not carry left-invariant K\"ahler structures \cite{AD}. 

Since $(N,g)$ has holonomy ${\rm U}(m)$, any parallel form on $(N,\bil)$ has even degree and is a multiple of $\omega^s$, for some $s\in \{0,\ldots, m\}$. In particular, since left-translations are isometries, for every $a\in N$ there exists $\lambda(a)\in \R$ such that $L_a^*\omega=\lambda(a)\omega$. This implies that the volume form $\omega^{m}$ verifies $L_a^*\omega^{m}=\lambda(a)^m\omega^{m}$. But since the metric is left-invariant and $N$ is connected, the volume form is preserved by left-translations, that is  $L_a^*\omega^{m}=\omega^{m}$. Therefore, $\lambda(a)=1$ for all $a\in N$, or equivalently, $\omega$ is left-invariant as claimed.
\end{proof}

\begin{remark} 
Let $N$ be a simply connected (non-abelian) nilpotent Lie group.
The last statement of Proposition \ref{pro:np} says that $N$ carries no Kähler structure $(g,J)$ with left-invariant underlying metric $g$. One can compare this with a result in \cite{BE-GO}, where the authors prove that $N$ does not admit K\"ahler structures $(g,J)$ with both $g$ and $J$ invariant with respect to some co-compact lattice. 
\end{remark}

Theorem \ref{teo:dR} and Propositions \ref{p32} and \ref{pro:np} give the following decomposition result for left-invariant Killing forms on nilpotent Lie groups.

\begin{cor}\label{deK}
Every left-invariant Killing form on a connected and simply connected nilpotent Lie group is the sum of Killing forms on its de Rham factors, and a left-invariant parallel form. The latter is a linear combination of wedge products of volume forms of some of the irreducible de Rham factors and of any left-invariant form on the flat factor.
\end{cor}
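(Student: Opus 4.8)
The plan is to prove the statement in two stages. First I would use Proposition \ref{p32} inductively to decompose an arbitrary left-invariant Killing form on $N=N_0\times N_1\times\cdots\times N_q$ as a sum of Killing forms pulled back from the de Rham factors plus a left-invariant parallel form; then I would use the holonomy description of parallel forms together with Proposition \ref{pro:np} to determine exactly which forms the parallel remainder can be.

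For the first stage I would argue by induction on the number $q$ of irreducible (non-flat) factors. When $q=0$ the group $N=N_0$ is flat and abelian, so $\nabla$ vanishes on left-invariant forms by \eqref{eq.Koszul1}; every left-invariant form is then parallel and the statement is immediate. For the inductive step, split $N=\hat{N}\times N_q$ with $\hat{N}=N_0\times\cdots\times N_{q-1}$, which is legitimate because by Theorem \ref{teo:dR} the Lie algebra $\mn=\hat{\mn}\oplus\mn_q$ is an orthogonal sum of ideals. Writing a Killing $k$-form as $\alpha=\sum_{l=0}^k\alpha_l$ with $\alpha_l\in\Lambda^l\hat{\mn}\otimes\Lambda^{k-l}\mn_q$, Proposition \ref{p32} gives that $\alpha_0\in\Lambda^k\mn_q$ is Killing on $N_q$, that $\alpha_k\in\Lambda^k\hat{\mn}$ is Killing on $\hat{N}$, and that each middle term $\alpha_l$, $1\le l\le k-1$, is parallel on $N$. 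Applying the induction hypothesis to $\alpha_k$ decomposes it into Killing forms on $N_1,\ldots,N_{q-1}$ plus a parallel form on $\hat{N}$, and collecting the pieces recovers $\alpha$ as a sum of Killing forms on the factors $N_1,\dots,N_q$ and a parallel form on $N$. Along the way one should record the easy fact, already implicit in the proof of Proposition \ref{p32}, that a Killing form on a factor $N_i$, viewed as an element of $\Lambda^k\mn_i\subset\Lambda^k\mn$, remains Killing on $N$: this holds because $\nabla$ preserves the orthogonal decomposition and $[\mn_i,\mn_j]\subset\mn_i\cap\mn_j=0$ for $i\neq j$, so $z\lrcorner\nabla_z$ acts on such a form only through its $\mn_i$-component.

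For the second stage I would recall that a parallel form on a Riemannian product is a sum of wedge products of parallel forms on the factors, since parallel forms are exactly the fixed points of the holonomy representation on the exterior algebra and the holonomy of a product is the product of the holonomies. On each irreducible non-abelian factor $N_i$ with $1\le i\le q$, Proposition \ref{pro:np} forces the only parallel forms to be constants and multiples of the volume form $\mathrm{vol}_i$; on the flat factor $N_0=\R^{\dim\mn_0}$ every left-invariant form is parallel, again because $\nabla$ vanishes on left-invariant forms of an abelian metric Lie algebra. Consequently any left-invariant parallel form on $N$ is a linear combination of wedge products $\eta_0\wedge\mathrm{vol}_{i_1}\wedge\cdots\wedge\mathrm{vol}_{i_s}$, where $\eta_0$ is an arbitrary left-invariant form on $N_0$ and $\{i_1,\dots,i_s\}\subseteq\{1,\dots,q\}$, which is precisely the asserted description of the parallel summand. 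I would also note that a Killing form on the flat factor $N_0$ is itself parallel, so it is harmlessly absorbed into this parallel term and need not be listed separately.

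Since the conceptual work is already contained in Theorem \ref{teo:dR} and Propositions \ref{p32} and \ref{pro:np}, the remaining effort is essentially bookkeeping, and the step I expect to require the most care is the inductive recombination in the first stage: one must check that the forms produced by Proposition \ref{p32} at each step, together with those produced by the induction hypothesis, genuinely reassemble into the original $\alpha$ without overlap, and that ``Killing form on a de Rham factor'' is interpreted consistently as the pullback of a Killing form under the projection $N\to N_i$.
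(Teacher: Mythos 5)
Your proposal is correct and follows essentially the same route as the paper, which states Corollary \ref{deK} as an immediate consequence of Theorem \ref{teo:dR}, Proposition \ref{p32} and Proposition \ref{pro:np}; your contribution is merely to spell out the induction on the number of irreducible de Rham factors and the holonomy description of parallel forms on a product, both of which are the intended (and valid) bookkeeping. No gaps.
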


\section{Left-invariant Killing forms on irreducible 2-step nilpotent Lie groups}

In this section  we recall some basic facts about the geometry of 2-step nilpotent Lie groups endowed with a left-invariant metric and their left-invariant Killing forms; we refer to \cite{dBM2} for further details. By Corollary \ref{deK}, the study of left-invariant Killing forms on these manifolds reduces to considering only the de Rham irreducible case.

Let $\mn$ be a non-abelian $2-$step nilpotent Lie algebra, i.e. $0\neq \mn'\subseteq \mz$, and let $N$ be its corresponding connected and simply connected $2-$step nilpotent Lie group.  We consider a left-invariant metric $g$ on $N$ so that $(N,g)$ is  a Riemannian manifold, which we assume de Rham irreducible. By Theorem \ref{teo:dR}, $(\mn,g)$ is irreducible as metric Lie algebra and, in particular, $\mz=\mn'$.

The main geometric properties of $(N,g)$ can be described through objects in the metric Lie algebra $(\mn,g)$ (see \cite{EB}).

Let $\mv$ denote the orthogonal complement of $\mz$ in $\mn$ so that $\mn=\mv\oplus\mz$ as an orthogonal direct sum of vector spaces.  Each central element $z\in\mz$ defines an endomorphism $j(z):\mv\lra\mv$ by the equation
\begin{equation}\label{eq:jota}
\lela j(z)x,y\rira =\lela z,[x,y]\rira, \quad \mbox{ for all } x,y\in\mv.
\end{equation}

Let $\sso(\mv)$ denote the Lie algebra of skew-symmetric endomorphisms of $\mv$ with respect to $\bil$. It is straightforward that $j(z)\in \so(\mv)$ for all $z\in \mz$.
 
The covariant derivative of left-invariant vector fields can be expressed using this map. Indeed, using \eqref{eq.Koszul1} we readily obtain 
\begin{equation}\label{eq:nabla}\left\{
\begin{array}{ll}
\nabla_x y=\frac12 \,[x,y] & \mbox{ if } x,y\in\mv,\\
\nabla_x z=\nabla_zx=-\frac12 j(z)x & \mbox{ if } x\in\mv,\,z\in\mz,\\
\nabla_z z'=0& \mbox{ if } z, z'\in\mz.
\end{array}\right.
\end{equation}

The irreducibility of $\mn$ implies that the linear map $j:\mz\lra \so(\mv)$ is injective. We recall the following general fact that will be used later in the presentation. A proof can be found in \cite{dBM2}.

\begin{lm} \label{lm:imjzv} If $\mn=\mv\oplus\mz$ is a $2-$step nilpotent Lie algebra, then $\mv=\sum_{z\in\mz}{\rm Im} j(z)$. Equivalently, $\bigcap_{z\in \mz}\ker j(z)=\{0\}$.
\end{lm}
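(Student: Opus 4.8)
The plan is to prove the equality $\mv=\sum_{z\in\mz}\Im j(z)$ by first reducing it to the equivalent statement $\bigcap_{z\in\mz}\ker j(z)=\{0\}$ using skew-symmetry, and then establishing the latter by a direct argument that exploits the $2$-step nilpotency of $\mn$.

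For the reduction, I would use that each $j(z)$ is a skew-symmetric endomorphism of $\mv$ with respect to $\bil$. For any skew-symmetric operator the image is the orthogonal complement of the kernel, so $\Im j(z)=(\ker j(z))^\perp$ for every $z\in\mz$. Taking the sum over $z\in\mz$ and using that orthogonal complementation exchanges sums and intersections gives
$$\sum_{z\in\mz}\Im j(z)=\sum_{z\in\mz}(\ker j(z))^\perp=\Big(\bigcap_{z\in\mz}\ker j(z)\Big)^\perp.$$
Hence $\mv=\sum_{z\in\mz}\Im j(z)$ holds if and only if $\bigcap_{z\in\mz}\ker j(z)=\{0\}$, and it suffices to prove this last equality.

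To do so, I would take an element $x$ in $\bigcap_{z\in\mz}\ker j(z)$, which lies in $\mv$ since each $j(z)$ acts on $\mv$. By the defining relation \eqref{eq:jota}, the condition $j(z)x=0$ for all $z\in\mz$ means $\lela z,[x,y]\rira=0$ for every $y\in\mv$ and every $z\in\mz$. Because $\mn$ is $2$-step nilpotent, $[x,y]\in\mn'\subseteq\mz$; choosing in particular $z=[x,y]$ yields $\lela [x,y],[x,y]\rira=0$, so $[x,y]=0$ for all $y\in\mv$. On the other hand $[x,z]=0$ for all $z\in\mz$ because $\mz$ is the center. Therefore $x$ commutes with all of $\mn=\mv\oplus\mz$, i.e. $x\in\mz$; combined with $x\in\mv=\mz^\perp$ this forces $x=0$, as desired.

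There is no serious obstacle in this argument; the only point requiring genuine care is the reduction step, where one must use that $j(z)$ is \emph{skew-symmetric} (not merely a linear endomorphism) in order to identify $\Im j(z)$ with $(\ker j(z))^\perp$ and thereby pass between the two formulations.
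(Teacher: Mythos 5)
Your proof is correct; the paper itself does not reprove this lemma (it defers to \cite{dBM2}), but your argument --- reducing to $\bigcap_{z\in\mz}\ker j(z)=\{0\}$ via skew-symmetry of $j(z)$, then using $[x,y]\in\mn'\subseteq\mz$ to take $z=[x,y]$ and conclude that such an $x$ is central, hence lies in $\mz\cap\mv=\{0\}$ --- is exactly the standard argument this reference contains. No gaps.
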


In what follows we recall some properties of left-invariant  Killing forms on $2-$step nilpotent Lie groups. We include improved versions of the results in  \cite{dBM2}.

The space of $k-$forms on $\mn$ can be decomposed, using the fact that  $\mn=\mv\oplus\mz$, as follows
\begin{equation}\label{eq:lambdad}
\Lambda^k\mn=\bigoplus_{l=0}^k \Lambda^l\mv\otimes \Lambda^{k-l}\mz.
\end{equation}
Given $\alpha\in \Lambda^k\mn$, we write accordingly $\alpha_l$ for the projection of $\alpha$ on $\Lambda^l\mv\otimes \Lambda^{k-l}\mz$ with respect to this decomposition. Recall that $\mn$ is assumed to be irreducible so the decomposition of $k-$forms in \eqref{eq:lambdad} is not the same that the one considered in the previous section for reducible Lie algebras.

In order to state the characterization of Killing forms, we recall that every skew-symmetric endomorphism $A$ of an inner product space $(V,\bil)$ extends as a derivation, denoted by $A_*$,  of the exterior algebra $\Lambda^*V$ by the formula 
\begin{equation}\label{der} A_*\alpha:=\sum_{i=1}^nAe_i\wedge e_i\lrcorner\ \alpha,\qquad \mbox{ for all }\alpha\in \Lambda^*V,
\end{equation}
where $\{e_1, \ldots, e_n\}$ is an orthonormal basis of $(V,\bil)$.  It is straightforward that $A_*$ preserves the degree, verifies the Leibniz rule with respect to the interior and wedge products and, if $\alpha\in \Lambda^2V$ is viewed as a skew-symmetric endomorphism of $V$, then $A_*\alpha=[A,\alpha]$. Moreover, the map from $\so(V)$ to $\so(\Lambda^kV)$ defined by $A\mapsto A_*$ is a Lie algebra endomorphism.

\begin{pro} \label{pro:killgen} Let $\mn$ be a $2-$step nilpotent Lie algebra with orthogonal decomposition $\mn=\mv\oplus\mz$, and consider  orthonormal bases $\{e_1, \ldots, e_n\}$ and $\{z_1, \ldots, z_m\}$ of $\mv$ and $\mz$, respectively. 
Then a $k-$form $\alpha$ on $\mn$ defines a left-invariant Killing form on $(N,g)$ if and only if for every $x\in \mv$ and $z\in \mz$,  its components with respect to \eqref{eq:lambdad} verify 
\begin{equation}
\sum_{t=1}^mj(z_t)x\wedge (x\lrcorner\ z_t\lrcorner\ \alpha_{k-1})=0,\label{eq:i1}\end{equation}
 and 
 \begin{equation}
\sum_{i=1}^n [x,e_i]\wedge (z\lrcorner \ e_i\lrcorner\ \alpha_{l+1})=j(z)x\lrcorner\ \alpha_{l+1} -x\lrcorner\  j(z)_*\alpha_{l+1}
+ \sum_{t=1}^m j(z_t)x\wedge (z\lrcorner \ z_t\lrcorner\ \alpha_{l-1}),\label{eq:i2}
\end{equation}
for every $l=0,\ldots,k-1$.
\end{pro}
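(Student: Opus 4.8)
The plan is to start from the equivalent form of the Killing condition recalled after \eqref{eq:Killforminv}: $\alpha\in\Lambda^k\mn$ is Killing if and only if $y\lrcorner\ \nabla_y\alpha=0$ for all $y\in\mn$. Writing $y=x+z$ with $x\in\mv$, $z\in\mz$, the expression $y\lrcorner\ \nabla_y\alpha$ is homogeneous quadratic in $y$, so replacing $x$ by $tx$ and comparing the coefficients of $t^2,t^1,t^0$ shows that its vanishing on all of $\mn$ is equivalent to the three conditions
\[
\mathrm{(I)}\ x\lrcorner\ \nabla_x\alpha=0,\qquad \mathrm{(II)}\ x\lrcorner\ \nabla_z\alpha+z\lrcorner\ \nabla_x\alpha=0,\qquad \mathrm{(III)}\ z\lrcorner\ \nabla_z\alpha=0,
\]
required to hold for all $x\in\mv$ and $z\in\mz$.

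Next I would make these explicit. Since $g$ is parallel, each $\nabla_y\in\so(\mn)$ acts on $\Lambda^*\mn$ as the derivation \eqref{der}; evaluating \eqref{eq:nabla} on the chosen bases gives
\[
\nabla_x\alpha=\frac12\sum_{i}[x,e_i]\wedge(e_i\lrcorner\ \alpha)-\frac12\sum_t j(z_t)x\wedge(z_t\lrcorner\ \alpha),\qquad \nabla_z\alpha=-\frac12\, j(z)_*\alpha.
\]
Computing $z\lrcorner\ \nabla_x\alpha$ from the first formula, using $[x,e_i]\in\mz$, $j(z_t)x\in\mv$ and the defining identity $\lela z,[x,e_i]\rira=\lela j(z)x,e_i\rira$, and adding $x\lrcorner\ \nabla_z\alpha=-\frac12\,x\lrcorner\ j(z)_*\alpha$, condition (II) becomes
\[
\sum_i [x,e_i]\wedge(z\lrcorner\ e_i\lrcorner\ \alpha)=(j(z)x)\lrcorner\ \alpha-x\lrcorner\ j(z)_*\alpha+\sum_t j(z_t)x\wedge(z\lrcorner\ z_t\lrcorner\ \alpha);
\]
projecting this onto the summands of \eqref{eq:lambdad} and tracking bidegrees yields exactly \eqref{eq:i2} for each $l=0,\dots,k-1$. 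Similarly, the first formula gives $x\lrcorner\ \nabla_x\alpha=-\frac12\sum_i[x,e_i]\wedge(x\lrcorner\ e_i\lrcorner\ \alpha)+\frac12\sum_t j(z_t)x\wedge(x\lrcorner\ z_t\lrcorner\ \alpha)$, whose component of bidegree $(k-1,0)$ (the only one without a $\mz$-leg) is precisely \eqref{eq:i1}.

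The decisive observation is that (III) and the lower-bidegree parts of (I) are consequences of (II). Indeed, contracting (II) with $x$ and using $x\lrcorner\ x\lrcorner=0$ gives $z\lrcorner\ (x\lrcorner\ \nabla_x\alpha)=0$ for all $z\in\mz$; granting (II), the $(k-1)$-form $x\lrcorner\ \nabla_x\alpha$ therefore carries no $\mz$-leg, so (I) collapses to its bidegree-$(k-1,0)$ part, namely \eqref{eq:i1}. Symmetrically, contracting (II) with $z$ gives $x\lrcorner\ (z\lrcorner\ \nabla_z\alpha)=0$ for all $x\in\mv$, so $z\lrcorner\ \nabla_z\alpha$ lies in $\Lambda^{k-1}\mz$; but $z\lrcorner\ \nabla_z\alpha=-\frac12\, j(z)_*(z\lrcorner\ \alpha)$ (using $j(z)z=0$ and the Leibniz rule for $j(z)_*$), and its component coming from $\alpha_l$ has $\mv$-degree $l$, while the $l=0$ component involves only central forms and is annihilated by $j(z)_*$; hence having no positive-$\mv$-degree part already forces $z\lrcorner\ \nabla_z\alpha=0$, i.e.\ (III). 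Combining, $\alpha$ is Killing $\iff$ (I)$\wedge$(II)$\wedge$(III) $\iff$ \eqref{eq:i1}$\wedge$\eqref{eq:i2}.

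I expect the main obstacle to be the bidegree bookkeeping in the second paragraph — matching the projections of (II) term by term with the three groups of terms in \eqref{eq:i2}, and controlling all the signs produced when interior products by $\mv$- and $\mz$-vectors pass through mixed-type forms. By contrast, the genuinely conceptual ingredient is the short contraction argument of the third paragraph, which is exactly what makes the two equations \eqref{eq:i1} and \eqref{eq:i2} sufficient, with no extra conditions arising from the $\mz\times\mz$ or the lower $\mv\times\mv$ parts.
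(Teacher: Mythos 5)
Your proof is correct and follows essentially the same route as the paper: the three polarized conditions (I), (II), (III) are exactly the system \eqref{eq:pp1}--\eqref{eq:pp3} that the paper imports from \cite[Corollary 3.3]{dBM2} (you simply re-derive it from \eqref{eq:nabla}), and your contraction of (II) with $x$ and with $z$ to absorb the lower parts of (I) and all of (III) is precisely the paper's reduction of \eqref{eq:pp1} and \eqref{eq:pp2} to consequences of \eqref{eq:i2}.
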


\begin{proof} From \cite[Corollary 3.3]{dBM2} we know that $\alpha\in \Lambda^k\mn$ defines a left-invariant Killing form on $(N,g)$ if and only if  for every $l=0,\ldots,k-1$, $x\in \mv$ and $z\in \mz$ the following equations hold:
\begin{eqnarray}
\sum_{i=1}^n [x,e_i]\wedge (x\lrcorner \ e_i\lrcorner\ \alpha_{l+2}) &=& \sum_{t=1}^mj(z_t)x\wedge (x\lrcorner \ z_t\lrcorner\ \alpha_l),\label{eq:pp1}\\
 \sum_{i=1}^n j(z)e_i\wedge (z\lrcorner \ e_i\lrcorner\ \alpha_l) &=&0,\label{eq:pp2}\\
\sum_{i=1}^n [x,e_i]\wedge (z\lrcorner \ e_i\lrcorner\ \alpha_{l+1})&=&2( j(z)x)\lrcorner\ \alpha_{l+1} + \sum_{i=1}^n j(z)e_i\wedge (x\lrcorner \ e_i\lrcorner\ \alpha_{l+1})\label{eq:pp3}\\
&&\qquad+ \sum_{t=1}^m j(z_t)x\wedge (z\lrcorner \ z_t\lrcorner\ \alpha_{l-1}). \nonumber
\end{eqnarray}

From  \eqref{der}, it is easy to check that \eqref{eq:pp3} is equivalent to \eqref{eq:i2}. Moreover, \eqref{eq:i1} is just \eqref{eq:pp1} for $l=k-1$. It remains to show that \eqref{eq:i2} implies \eqref{eq:pp2} for $l=0,\ldots,k-1$ and \eqref{eq:pp1} for $l=0,\ldots,k-2$.

By \eqref{der}, together with the fact that $j(z)z=0$, \eqref{eq:pp2} is equivalent to $z\lrcorner\ j(z)_*\alpha_l=0$ for $l=0, \ldots, k-1$. Taking the interior product with $z$ in \eqref{eq:i2} and using \eqref{eq:jota}, we obtain $z\lrcorner\ x \lrcorner\ j(z)_*\alpha_{l+1}=0$ for every $x\in \mv$. Taking $x=e_i$ in this expression, making the wedge product with $e_i$ and summing over $i$ we get $z \lrcorner\ j(z)_*\alpha_{l+1}=0$ for all $z\in \mz$ and $l=0,\ldots,k-1$. 
Thus, \eqref{eq:i2} implies \eqref{eq:pp2} for $l=1, \ldots, k-1$. Moreover, \eqref{eq:pp2} for $l=0$ is trivially satisfied.

We now take the interior product with $x$ in \eqref{eq:i2} and obtain
\begin{eqnarray}
\sum_{i=1}^n [x,e_i]\wedge (x\lrcorner\ z\lrcorner \ e_i\lrcorner\ \alpha_{l+1})&=&x\lrcorner\ j(z)x\lrcorner\ \alpha_{l+1} - \sum_{t=1}^m j(z_t)x\wedge (x\lrcorner\ z\lrcorner \ z_t\lrcorner\ \alpha_{l-1}). \nonumber
\end{eqnarray}
In this equation, put $z=z_s$, take the wedge product with  $z_s$ and sum over $s=1, \ldots, m$, to obtain
\begin{eqnarray*}
 -(k-l-1)
 \sum_{i=1}^n [x,e_i]\wedge  ( x\lrcorner\ e_i\lrcorner\ \alpha_{l+1})&=& \sum_{i=1}^n [x,e_i]\wedge (x\lrcorner\ e_i\lrcorner\ \alpha_{l+1}) 
 \\
 &&\quad-(k-l) \sum_{t=1}^m  j(z_t)x\wedge( x\lrcorner\   z_t\lrcorner\ \alpha_{l-1}),
\end{eqnarray*}
which reads 
$$  \sum_{i=1}^n [x,e_i]\wedge  ( x\lrcorner\ e_i\lrcorner\ \alpha_{l+1})= \sum_{t=1}^m  j(z_t)x\wedge( x\lrcorner\   z_t\lrcorner\ \alpha_{l-1}).$$
This last equation is nothing but \eqref{eq:pp1} for $l-1$. We thus have proved that \eqref{eq:i2} for $l=0, \ldots, k-1$ implies \eqref{eq:pp1} for $l=0, \ldots, k-2$.
\end{proof}

\begin{remark}\label{rem:evodd} 
Let $\alpha$ be a $k-$form on $\mn$ and consider its components $\alpha_l$ with respect to the decomposition in \eqref{eq:lambdad}. Denote 
\begin{equation*}
\alpha^{\rm ev}:=\sum_{l:\, k-l \mbox{\tiny{ even}}}\alpha_l=\alpha_k+\alpha_{k-2}+\ldots\quad \mbox{ and } \quad \alpha^{\rm odd}:=\sum_{l:\, k-l\mbox{\tiny{ odd}}}\alpha_l=\alpha_{k-1}+\alpha_{k-3}+\ldots.
\end{equation*}
The system of equations in Proposition \ref{pro:killgen} splits into two uncoupled systems: one involving $\alpha^{\rm ev}$ and another one involving $\alpha^{\rm odd}$. Therefore, $\alpha$ defines a Killing form on $(N,g)$ if and only if both $\alpha^{\rm ev}$ and $\alpha^{\rm odd}$ define Killing forms on $(N,g)$.
\end{remark}

\begin{pro}\label{ref:jzl}
Let $\alpha$ be a $k-$form  on a $2-$step nilpotent Lie algebra $\mn=\mv\oplus\mz$ defining a left-invariant Killing form on $(N,g)$ and consider $\{z_1, \ldots,z_m\}$ an orthonormal basis of the center. Then for every $x\in \mv$, $z\in \mz$,
\begin{eqnarray}\label{eq:i3}
j(z)_*\alpha_{l+1}=\frac2{k+1}\sum_{t=1}^m j(z_t)\wedge ( z \lrcorner\ z_t  \lrcorner\ \alpha_{l-1}),  \qquad l=0, \ldots, k-1.
\end{eqnarray}
Moreover, $\alpha_1=0$.
\end{pro}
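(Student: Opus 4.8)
The plan is to derive \eqref{eq:i3} directly from the Killing equations of Proposition \ref{pro:killgen}, exploiting the fact (established inside its proof) that a Killing form satisfies \eqref{eq:pp2}, equivalently $z\lrcorner j(z)_*\alpha_l=0$ for all $z\in\mz$ and all $l$. Polarizing this in $z$ yields the symmetric relation
\[
z\lrcorner j(z')_*\alpha_l+z'\lrcorner j(z)_*\alpha_l=0\qquad\text{for all }z,z'\in\mz,
\]
which will be the decisive extra input. Throughout I would use the elementary facts that, for $A\in\so(\mv)$ and a form $\gamma$ of $\mv$-degree $p$ and $\mz$-degree $q$, one has $\sum_i e_i\wedge(Ae_i\lrcorner\gamma)=-A_*\gamma$, $\sum_i e_i\wedge(e_i\lrcorner\gamma)=p\,\gamma$, $\sum_t z_t\wedge(z_t\lrcorner\gamma)=q\,\gamma$, and $\sum_i e_i\wedge j(z_t)e_i=2\,j(z_t)$ (the $2$-form attached to $j(z_t)$).

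First I would fix $l$ and apply to \eqref{eq:i2} the operation obtained by setting $x=e_i$, taking the wedge product with $e_i$, and summing over the orthonormal basis $\{e_i\}$ of $\mv$. The three terms on the right-hand side transform, via the identities above, into $-j(z)_*\alpha_{l+1}$, $-(l+1)\,j(z)_*\alpha_{l+1}$ and $2\sum_t j(z_t)\wedge(z\lrcorner z_t\lrcorner\alpha_{l-1})$ respectively. On the left-hand side, substituting the structure constants $[e_i,e_{i'}]=\sum_t\langle j(z_t)e_i,e_{i'}\rangle z_t$ (which is \eqref{eq:jota}) and tracking signs carefully, the same operation collapses to $-\sum_t z_t\wedge j(z_t)_*(z\lrcorner\alpha_{l+1})$. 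Collecting terms produces a first relation
\[
\sum_t z_t\wedge j(z_t)_*(z\lrcorner\alpha_{l+1})=(l+2)\,j(z)_*\alpha_{l+1}-2\sum_t j(z_t)\wedge(z\lrcorner z_t\lrcorner\alpha_{l-1}).
\]

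Next I would evaluate the left-hand side of this relation a second, independent way. Since $j(z_t)$ vanishes on $\mz$, the derivation $j(z_t)_*$ commutes with the contraction $z\lrcorner$; moving it through and then invoking the polarized relation in the form $z\lrcorner j(z_t)_*\alpha_{l+1}=-z_t\lrcorner j(z)_*\alpha_{l+1}$ rewrites the expression as $-\sum_t z_t\wedge(z_t\lrcorner j(z)_*\alpha_{l+1})$. As $j(z)_*\alpha_{l+1}$ has $\mz$-degree $k-l-1$, the $\mz$-degree operator $\sum_t z_t\wedge(z_t\lrcorner\,\cdot\,)$ scales it by $k-l-1$, so this second evaluation equals $-(k-l-1)\,j(z)_*\alpha_{l+1}$. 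Equating the two expressions for the same left-hand side gives $(l+2)\,j(z)_*\alpha_{l+1}-2\sum_t j(z_t)\wedge(z\lrcorner z_t\lrcorner\alpha_{l-1})=-(k-l-1)\,j(z)_*\alpha_{l+1}$, and since $(l+2)+(k-l-1)=k+1$ this is exactly \eqref{eq:i3}. The main obstacle — and the conceptual crux — is precisely this elimination step: recognizing that the awkward term $\sum_t z_t\wedge j(z_t)_*(z\lrcorner\alpha_{l+1})$ can be computed separately through the already-available relation \eqref{eq:pp2}, so that the two evaluations combine to isolate $j(z)_*\alpha_{l+1}$. The sign bookkeeping in the left-hand side computation is the only genuinely delicate point.

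Finally, for $\alpha_1=0$ I would specialize \eqref{eq:i3} to $l=0$: since $\alpha_{-1}=0$ by convention, the right-hand side vanishes and $j(z)_*\alpha_1=0$ for every $z\in\mz$. Writing $\alpha_1=\sum_a e_a\wedge\eta_a$ with $\eta_a\in\Lambda^{k-1}\mz$, we have $j(z)_*\alpha_1=\sum_a j(z)e_a\wedge\eta_a$, so the linear map $\mv\to\Lambda^{k-1}\mz$ sending $v\mapsto\eta_v$ (with $\eta_{e_a}=\eta_a$) must vanish on ${\rm Im}\,j(z)$ for every $z$. By Lemma \ref{lm:imjzv}, $\mv=\sum_{z\in\mz}{\rm Im}\,j(z)$, hence this map is identically zero and therefore $\alpha_1=0$.
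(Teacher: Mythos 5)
Your proof is correct, and it follows the paper's strategy for the first half while diverging in the key elimination step. Both you and the paper set $x=e_i$ in \eqref{eq:i2}, take the wedge product with $e_i$ and sum over an orthonormal basis of $\mv$, arriving at the same intermediate identity $\sum_t z_t\wedge j(z_t)_*(z\lrcorner\ \alpha_{l+1})=(l+2)\,j(z)_*\alpha_{l+1}-2\sum_t j(z_t)\wedge(z\lrcorner\ z_t\lrcorner\ \alpha_{l-1})$ (the paper says ``interior product'' at this point, but the displayed formula there is exactly the wedge-product computation you carry out). To dispose of the left-hand term, the paper rewrites it via $z\lrcorner\ \bigl(\sum_t z_t\wedge j(z_t)_*\alpha_{l+1}\bigr)$, then sets $z=z_s$, wedges with $z_s$ and sums over $s$ to first solve for $\sum_t z_t\wedge j(z_t)_*\alpha_{l+1}$, and only then contracts back with $z$; you instead commute $j(z_t)_*$ past $z\lrcorner$ (legitimate since $j(z_t)$ kills $\mz$) and invoke the polarized form of $z\lrcorner\ j(z)_*\alpha_{l+1}=0$, which is indeed established for all components $\alpha_1,\dots,\alpha_k$ inside the proof of Proposition \ref{pro:killgen}, to evaluate the term directly as $-(k-l-1)\,j(z)_*\alpha_{l+1}$. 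Your route saves one resummation and makes the mechanism ($(l+2)+(k-l-1)=k+1$) transparent, at the price of explicitly importing \eqref{eq:pp2} in polarized form, whereas the paper's elimination stays entirely within consequences of \eqref{eq:i2}. Your argument for $\alpha_1=0$ is the paper's in different clothing: your map $v\mapsto\eta_v$ is precisely $v\mapsto v\lrcorner\ \alpha_1$, and both proofs conclude via Lemma \ref{lm:imjzv}.
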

\begin{proof} Let $\alpha\in \Lambda^k\mn$ define a Killing form on $(N,g)$ and write $\alpha=\sum_{l=0}^k\alpha_l$ with respect to the decomposition in \eqref{eq:lambdad}. Consider an orthonormal basis $\{e_1, \ldots, e_n\}$ of $\mv$.  According to Proposition \ref{pro:killgen}, $\alpha_l$ satisfies \eqref{eq:i2} for each $x=e_i$,  $i=1, \ldots, n$ and $z\in \mz$. Taking the interior product  of this equation with $e_i$ and summing over $i$ we obtain
$$-\sum_{t=1}^m z_t\wedge j(z_t)_*(z\lrcorner \ \alpha_{l+1})=-(l+2)j(z)_*\alpha_{l+1}+2\sum_{t=1}^mj(z_t)\wedge (z\lrcorner\ z_t\lrcorner\ \alpha_{l-1}).$$
Therefore, 
\begin{eqnarray}
z\ \lrcorner\ \left(\sum_{t=1}^m z_t\wedge j(z_t)_*\alpha_{l+1}\right)&=&j(z)_*\alpha_{l+1}-\sum_{t=1}^m z_t\wedge j(z_t)_*(z \lrcorner \ \alpha_{l+1})\nonumber\\
&=&-(l+1)j(z)_*\alpha_{l+1}+2\sum_{t=1}^mj(z_t)\wedge (z\lrcorner\ z_t\lrcorner\ \alpha_{l-1}).\label{eq:zin}
\end{eqnarray}
Putting $z=z_s$ in this equation, taking the interior product with $z_s$ and summing over $s$, we obtain
\begin{eqnarray}
\sum_{t=1}^m z_t\wedge j(z_t)_*\alpha_{l+1}
&=&-\frac{l+1}{k-l}\sum_{t=1}^m z_t\wedge j(z_t)_*\alpha_{l+1}+2\sum_{t=1}^mj(z_t)\wedge (z_t\lrcorner\ \alpha_{l-1}),\nonumber
\end{eqnarray}
which implies
$$
\sum_{t=1}^m z_t\wedge j(z_t)_*\alpha_{l+1} =\frac{2(k-l)}{k+1}\sum_{t=1}^mj(z_t)\wedge (z_t\lrcorner\ \alpha_{l-1}).
$$
Finally, we take the interior product with $z\in\mz$ in this equation so that we get \eqref{eq:i3}.

To prove that $\alpha_1=0$, first notice that \eqref{eq:i3} implies, for $l=0$, $j(z)_*\alpha_1=0$ and thus $j(z)_*(x\lrcorner\ \alpha_1)=j(z)x\lrcorner\ \alpha_1$. Moreover, since $x\lrcorner\ \alpha_1$ is a $k-1-$form on $\mz$, we also have $j(z)_*(x\lrcorner\ \alpha_1)=0$. Therefore, $j(z)x\lrcorner\ \alpha_1=0$ for every $z\in \mv$ and $x\in \mv$, which implies $\alpha_1=0$ in view of Lemma \ref{lm:imjzv}.
\end{proof}

\begin{cor}\label{cor:l1}
If $\alpha$ defines a Killing $k-$form and $\alpha_{l-1}=0$ for some $l\in \{1, \ldots, k-1\}$, then  $j(z)_*\alpha_{l+1}=0$ for every $z\in \mz$, and for every $x\in \mv$ one has 
\begin{equation}\label{eq:i2eq}\sum_{t=1}^m z_t\wedge  (j(z_t)x \lrcorner\ \alpha_{l+1})=0,
\end{equation}
where $\{z_1, \ldots, z_m\}$ is an orthonormal basis of $\mz$.
\end{cor}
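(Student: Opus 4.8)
The first assertion I would dispatch immediately. For $l\in\{1,\dots,k-1\}$ the identity \eqref{eq:i3} of Proposition \ref{ref:jzl} applies, and substituting the hypothesis $\alpha_{l-1}=0$ annihilates its right-hand side, so that $j(z)_*\alpha_{l+1}=0$ for every $z\in\mz$.

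For the second assertion I would feed the two facts now available --- namely $\alpha_{l-1}=0$ and $j(z)_*\alpha_{l+1}=0$ --- into equation \eqref{eq:i2} of Proposition \ref{pro:killgen}. The term $x\lrcorner\ j(z)_*\alpha_{l+1}$ and the sum involving $\alpha_{l-1}$ then drop out, leaving, for all $x\in\mv$ and $z\in\mz$, the reduced identity
\[
\sum_{i=1}^n [x,e_i]\wedge (z\lrcorner\ e_i\lrcorner\ \alpha_{l+1})=j(z)x\lrcorner\ \alpha_{l+1},
\]
where $\{e_1,\dots,e_n\}$ is an orthonormal basis of $\mv$. Since $\mn$ is $2$-step nilpotent, $[x,e_i]\in\mz$, and the defining relation \eqref{eq:jota} gives $[x,e_i]=\sum_{t=1}^m \lela j(z_t)x,e_i\rira\, z_t$. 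Inserting this and using $\sum_i \lela j(z_t)x,e_i\rira\, e_i\lrcorner\ =(j(z_t)x)\lrcorner$, the left-hand side collapses, and the reduced identity becomes
\[
\sum_{t=1}^m z_t\wedge\bigl(z\lrcorner\ (j(z_t)x\lrcorner\ \alpha_{l+1})\bigr)=j(z)x\lrcorner\ \alpha_{l+1}.
\]

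I would then set $\beta\ce\sum_{t=1}^m z_t\wedge(j(z_t)x\lrcorner\ \alpha_{l+1})$, precisely the form whose vanishing is claimed in \eqref{eq:i2eq}. Contracting $\beta$ with an arbitrary $z\in\mz$ and applying the Leibniz rule gives
\[
z\lrcorner\ \beta=\sum_{t=1}^m \lela z,z_t\rira\,(j(z_t)x\lrcorner\ \alpha_{l+1})-\sum_{t=1}^m z_t\wedge\bigl(z\lrcorner\ (j(z_t)x\lrcorner\ \alpha_{l+1})\bigr).
\]
By linearity of $j$ the first sum equals $j(z)x\lrcorner\ \alpha_{l+1}$, while the second sum is exactly the left-hand side of the displayed reduced identity, hence also equals $j(z)x\lrcorner\ \alpha_{l+1}$; the two cancel and $z\lrcorner\ \beta=0$ for every $z\in\mz$. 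Finally, $\beta$ lies in $\Lambda^l\mv\otimes\Lambda^{k-l}\mz$, and its central degree $k-l$ is at least $1$ because $l\le k-1$; since a form of positive $\mz$-degree that is annihilated by contraction with every central vector must vanish, I conclude $\beta=0$, which is \eqref{eq:i2eq}. The only genuinely non-obvious point --- and the main obstacle --- is the observation that contracting the target form $\beta$ with $z$ reproduces exactly the left-hand side of the reduced Killing equation, which is what forces the cancellation; the remaining steps are bookkeeping with bidegrees and the Leibniz rule.
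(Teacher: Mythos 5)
Your proposal is correct and follows essentially the same route as the paper: deduce $j(z)_*\alpha_{l+1}=0$ from \eqref{eq:i3}, reduce \eqref{eq:i2} accordingly, rewrite $[x,e_i]=\sum_t\lela j(z_t)x,e_i\rira z_t$ so that the reduced identity becomes $z\lrcorner\ \beta=0$ for the form $\beta$ of \eqref{eq:i2eq}, and conclude $\beta=0$ from its positive $\mz$-degree. The only cosmetic difference is that the paper makes the last step explicit by wedging with $z_s$ and summing over $s$, which is exactly the counting argument you invoke.
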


\begin{proof} 
Assume that $\alpha$ defines a Killing $k-$form and  $\alpha_{l-1}=0$ for some $l\in \{1, \ldots,k-1\}$. Then, it is clear from \eqref{eq:i3} that $j(z)_*\alpha_{l+1}=0$. Moreover, \eqref{eq:i2} holds and, since $j(z)_*\alpha_{l+1}=0$, it becomes
\begin{equation}\label{eq:8}
\sum_{i=t}^n z_t\wedge (z\lrcorner \ j(z_t)x\lrcorner\ \alpha_{l+1})=j(z)x\lrcorner\ \alpha_{l+1}, \quad \mbox{ for any }x\in \mv, \;z\in \mz.
\end{equation}
The left hand side in this equation equals 
\[j(z)x\lrcorner\ \alpha_{l+1}-
z\lrcorner\  \left(\sum_{i=t}^n z_t\wedge (j(z_t)x\lrcorner\ \alpha_{l+1})\right)
\] so \eqref{eq:8} is equivalent to 
\begin{equation*}
z\lrcorner\  \left(\sum_{i=t}^n z_t\wedge ( j(z_t)x\lrcorner\ \alpha_{l+1})\right)=0,  \quad \mbox{ for any }x\in \mv, \;z\in \mz. 
\end{equation*}
Taking the wedge product with $z$ in this equation, replacing $z=z_s$ and summing over $s$ we obtain \eqref{eq:i2eq}.
\end{proof}

\section{Killing forms on $2-$step nilpotent Lie groups with one-dimensional center}

In this section we assume that the 2-step nilpotent Lie algebra $\mn$ is non-abelian and possesses a center of dimension one. It is easy to prove that in this situation, $\mn$ is isomorphic to the Heisenberg Lie algebra. Let $z_1$ be a norm one vector in the center and denote $A:=j(z_1)\in \so(\mv)$, so that $j(z)=\lela z,z_1\rira A$ for every $z\in \mz$. Note that $A$ is invertible by Lemma \ref{lm:imjzv}. In what follows we study the Killing condition on $k-$forms on $\mn$. 

According to \eqref{eq:lambdad}, the space of $k-$forms on $\mn$ splits as
\begin{equation}
\Lambda^k\mn= \Lambda^k\mv\oplus (\Lambda^{k-1}\mv\otimes \mz).
\end{equation}
Thus any $k-$form $\alpha$ on $\mn$ can be written as $\alpha=\alpha_k+\alpha_{k-1}=\beta+z_1\wedge\gamma$  where $\beta\in \Lambda^{k}\mv$ and $\gamma\in \Lambda^{k-1}\mv$. 

\begin{pro}\label{pro:Ag} In the notations above, $\alpha\in \Lambda^k\mn$ is a Killing form if and only if $\beta=0$ and
\begin{equation}\label{eq:Ag}
Ax \wedge (x\lrcorner \ \gamma)=0,\quad \mbox{ for every  }x\in \mv.
\end{equation}
\end{pro}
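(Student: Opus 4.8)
The plan is to specialize the general Killing equations from Proposition~\ref{pro:killgen} to the case where the center is one-dimensional, spanned by the unit vector $z_1$. Writing $\alpha=\beta+z_1\wedge\gamma$ with $\beta\in\Lambda^k\mv$ and $\gamma\in\Lambda^{k-1}\mv$, I want to show that $\beta=0$ and that $\gamma$ satisfies \eqref{eq:Ag}. The simplification that drives everything is that the center has dimension $m=1$, so the only basis element is $z_t=z_1$, and $j(z)=\langle z,z_1\rangle A$ for every $z\in\mz$. This collapses the sums over $t$ to a single term and makes the terms involving $\alpha_{l-1}$ with $l-1<k-1$ vanish, because any form in $\Lambda^*\mv\otimes\Lambda^{\ge2}\mz$ is zero when $\dim\mz=1$.

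\begin{proof}
Since $\dim\mz=1$, the only nonzero components of $\alpha$ with respect to \eqref{eq:lambdad} are $\alpha_k=\beta\in\Lambda^k\mv$ and $\alpha_{k-1}=z_1\wedge\gamma$ with $\gamma\in\Lambda^{k-1}\mv$; all $\alpha_l$ with $l\le k-2$ vanish. We apply Proposition~\ref{pro:killgen} with $m=1$, $z_1$ the single basis vector of $\mz$, and $j(z_t)=A$.

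First, equation \eqref{eq:i1} reads $Ax\wedge(x\lrcorner\ z_1\lrcorner\ \alpha_{k-1})=0$ for every $x\in\mv$. Since $z_1\lrcorner\ \alpha_{k-1}=z_1\lrcorner\,(z_1\wedge\gamma)=\gamma$, this is precisely \eqref{eq:Ag}.

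It remains to treat \eqref{eq:i2}. We take $z=z_1$, which is allowed since $\mz=\R z_1$, and examine the equation for each $l=0,\ldots,k-1$. Because $\alpha_{l-1}$ lives in $\Lambda^{l-1}\mv\otimes\Lambda^{k-l+1}\mz$ and $\dim\mz=1$, we have $\alpha_{l-1}=0$ unless $k-l+1\le 1$, i.e. $l=k$; hence the last term of \eqref{eq:i2} always vanishes in our range. Thus \eqref{eq:i2} becomes
$$\sum_{i=1}^n [x,e_i]\wedge (z_1\lrcorner\ e_i\lrcorner\ \alpha_{l+1})=j(z_1)x\lrcorner\ \alpha_{l+1}-x\lrcorner\ j(z_1)_*\alpha_{l+1},$$
for $l=0,\ldots,k-1$. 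The only value of $l$ for which $\alpha_{l+1}$ is possibly nonzero and contains the factor $z_1$ is $l=k-2$, where $\alpha_{l+1}=\alpha_{k-1}=z_1\wedge\gamma$; for all other $l$ one has $z_1\lrcorner\ \alpha_{l+1}=0$ and the left-hand side vanishes, forcing $Ax\lrcorner\ \alpha_{l+1}=x\lrcorner\ A_*\alpha_{l+1}$. Applying this to $l=k-1$, where $\alpha_{l+1}=0$ trivially, and to $l$ such that $\alpha_{l+1}=\beta$, namely $l=k-1$, gives no information directly; instead, using \eqref{eq:jota} and $[x,e_i]=\sum_s\langle[x,e_i],z_s\rangle z_s=\langle Ax,e_i\rangle z_1$, the left-hand side for $l=k-2$ rewrites as
$$\sum_{i=1}^n \langle Ax,e_i\rangle\, z_1\wedge(z_1\lrcorner\ e_i\lrcorner\ \alpha_{k-1})=z_1\wedge\big(Ax\lrcorner\ (z_1\lrcorner\ \alpha_{k-1})\big)=z_1\wedge(Ax\lrcorner\ \gamma),$$
so \eqref{eq:i2} at $l=k-2$ is automatically compatible with \eqref{eq:Ag}. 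Finally, the equation \eqref{eq:i2} evaluated so as to isolate $\beta=\alpha_k$ shows $\beta$ must satisfy $j(z_1)x\lrcorner\ \beta-x\lrcorner\ j(z_1)_*\beta=0$ together with the vanishing of the contribution coming from $[x,e_i]\wedge(z_1\lrcorner\ e_i\lrcorner\ \alpha_k)=0$ (as $\alpha_k\in\Lambda^k\mv$ has no $z_1$); combining this with Proposition~\ref{ref:jzl}, which gives $j(z)_*\alpha_k=0$, one deduces $Ax\lrcorner\ \beta=0$ for all $x\in\mv$, and since $A$ is invertible and $\mv=\mathrm{Im}\,A$ by Lemma~\ref{lm:imjzv}, this forces $\beta=0$. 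Conversely, if $\beta=0$ and \eqref{eq:Ag} holds, all equations of Proposition~\ref{pro:killgen} are satisfied, so $\alpha=z_1\wedge\gamma$ is Killing.
\end{proof}

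The main obstacle I expect is the bookkeeping in the last paragraph: correctly extracting from the general system \eqref{eq:i2} the precise constraint that kills $\beta$. The terms mixing the $\mv$- and $\mz$-parts must be tracked carefully, and one has to combine \eqref{eq:i2} with the derived relation $j(z)_*\alpha_{l+1}=0$ from Proposition~\ref{ref:jzl} (valid here since all lower components vanish) to turn the condition on $\beta$ into $Ax\lrcorner\ \beta=0$; invoking invertibility of $A$ and Lemma~\ref{lm:imjzv} then finishes it. The contraction identities themselves are routine, but verifying that no cross-term survives to impose a spurious extra condition on $\gamma$ is where I would be most careful.
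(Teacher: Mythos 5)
Your overall strategy is the one the paper uses: specialize Proposition \ref{pro:killgen} to $m=1$, observe that \eqref{eq:i1} is literally \eqref{eq:Ag}, and extract $\beta=0$ from \eqref{eq:i2} at $l=k-1$ combined with $A_*\beta=0$ (which you get from Proposition \ref{ref:jzl}, and the paper from Corollary \ref{cor:l1}; these are the same fact) and the invertibility of $A$. That part of your argument is sound, modulo the confusing aside claiming ``$\alpha_{l+1}=0$ trivially'' for $l=k-1$ (it is $\beta$, which is exactly what you are trying to kill).

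The genuine gap is in the converse, which you dispose of in one sentence: ``if $\beta=0$ and \eqref{eq:Ag} holds, all equations of Proposition \ref{pro:killgen} are satisfied.'' The only non-trivial equation to check is \eqref{eq:i2} at $l=k-2$, and to verify it you need $j(z_1)_*\alpha_{k-1}=z_1\wedge A_*\gamma=0$, i.e.\ $A_*\gamma=0$. This does follow from \eqref{eq:Ag} (contract with $e_i$... actually, take $x=e_i$ in \eqref{eq:Ag} and sum over an orthonormal basis of $\mv$), but you never derive it, and without it the right-hand side of \eqref{eq:i2} contains the unaccounted term $-x\lrcorner\ j(z_1)_*\alpha_{k-1}$. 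Moreover, your explicit computation of the left-hand side at $l=k-2$ has a sign error: since $z_1\lrcorner\ e_i\lrcorner\ (z_1\wedge\gamma)=-e_i\lrcorner\ \gamma$, the left-hand side equals $-z_1\wedge(Ax\lrcorner\ \gamma)$, not $+z_1\wedge(Ax\lrcorner\ \gamma)$; this matches the right-hand side $Ax\lrcorner\ (z_1\wedge\gamma)=-z_1\wedge(Ax\lrcorner\ \gamma)$. Taken at face value, your sign would force $Ax\lrcorner\ \gamma=0$ for all $x$, hence $\gamma=0$, contradicting the statement you are proving. So the converse needs the two missing ingredients ($A_*\gamma=0$ from \eqref{eq:Ag}, and the correct sign comparison at $l=k-2$) to be written out, exactly as the paper does.
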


\begin{proof} Suppose $\alpha=\beta+z_1\wedge \gamma$ is a Killing $k-$form. Then, by Proposition \ref{pro:killgen}, \eqref{eq:i1} holds and, in the present notation, this equation is equivalent to  \eqref{eq:Ag}.

Equation \eqref{eq:i2} for $l=k-1$ reads
$$Ax\lrcorner \ \beta=x\lrcorner \ A_*\beta,$$ for $x\in \mv$. Moreover, since $\alpha_{k-2}=0$, we have that $A_*\beta=0$ from Corollary \ref{cor:l1}. Hence the equality above implies $Ax\lrcorner\ \beta=0$ for every $x\in \mv$, and since $A$  is surjective, we get $\beta=0$.

Conversely, suppose that $\alpha=z_1\wedge \gamma$, with $\gamma \in \Lambda^{k-1}\mv$ satisfying  \eqref{eq:Ag}; in particular, taking the sum over $x=e_i$ in this equation gives $A_*\gamma=0$. 

In order to show that $\alpha$ is a Killing form, we need to check that the two equations \eqref{eq:i1}--\eqref{eq:i2} in Proposition \ref{pro:killgen} hold. As already noticed, \eqref{eq:i1} is equivalent to \eqref{eq:Ag}. Moreover, \eqref{eq:i2} is trivially satisfied for $l=0,\ldots,k-3, k-1$ since $\alpha_{0},\ldots, \alpha_{k-2},\alpha_k$ vanish. Using the fact that $A_*\gamma=0$,  \eqref{eq:i2} for $l=k-2$ reads
$$
-\sum_{i=1}^n \lela [x,e_i],z_1\rira z_1 \wedge (e_i\lrcorner\ \gamma)=Ax\lrcorner\ (z_1\wedge \gamma).$$ 
It is easy to check that the left hand side equals $-z_1\wedge (Ax\lrcorner\ \gamma)$, and thus coincides with the  right hand side. Therefore, \eqref{eq:i2} for $l=k-2$ holds as well, and $\alpha$ is a Killing $k-$form.
\end{proof}

\begin{remark}\label{rem:g1}
Since $A$ is injective, \eqref{eq:Ag} has no solution for $\gamma\in\mv$, i.e. when $\alpha $ is a $2-$form. Therefore the Heisenberg Lie group endowed with any possible left-invariant metric, possesses no non-zero Killing $2-$forms; this fact was already known from \cite{BDS}.
\end{remark}

We can now give the precise description of Killing forms on $2-$step nilpotent metric Lie algebras with one-dimensional center. From now on we will use the same notation either for a skew-symmetric endomorphism $B\in \so(\mv)$ or the $2-$form $B=\frac12\sum_{i=1}^n e_i\wedge B e_i$, where $\{e_1, \ldots, e_n\}$ denotes an orthonormal basis  of $\mv$.

\begin{teo}\label{aa} The space $\mathcal{K}^k(\mn,g)$ of Killing $k-$forms on a $2-$step nilpotent metric Lie algebra $(\mn,g)$ with one-dimensional center is zero for $k$ even and is the real line spanned by $z_1\wedge A \wedge \ldots\wedge A$ for any $k$ odd with $k\le\dim(\mn)$. 
\end{teo}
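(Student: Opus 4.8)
The plan is to derive Theorem~\ref{aa} from Proposition~\ref{pro:Ag} together with the linear algebra statement announced in the introduction, reserving the bulk of the effort for the latter. First I would invoke Proposition~\ref{pro:Ag}: writing $\alpha=\beta+z_1\wedge\gamma$ with $\beta\in\Lambda^k\mv$ and $\gamma\in\Lambda^{k-1}\mv$, the form $\alpha$ is Killing precisely when $\beta=0$ and
\[
Ax\wedge(x\lrcorner\ \gamma)=0\qquad\text{for all }x\in\mv .
\]
Hence $\mathcal{K}^k(\mn,g)$ is in bijection with the space of $\gamma\in\Lambda^{k-1}\mv$ solving this single equation, and the whole problem becomes one of exterior algebra on $(\mv,A)$.

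Next I would recast this equation in terms of the $2$-form attached to $A$. Adopting the convention (introduced just before the theorem) that $A$ also denotes the $2$-form $\tfrac12\sum_i e_i\wedge Ae_i$, a one-line computation gives $x\lrcorner\ A=Ax$, the metric dual of $Ax$; the displayed equation therefore reads $(x\lrcorner\ A)\wedge(x\lrcorner\ \gamma)=0$ for all $x\in\mv$. Since $A\in\so(\mv)$ is invertible by Lemma~\ref{lm:imjzv}, the $2$-form $A$ is non-degenerate, so $\gamma$ is exactly a solution of the linear algebra problem of the introduction with $\omega=A$.

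Granting that result, $\gamma$ must be a constant multiple of an exterior power $A\wedge\cdots\wedge A$ ($p$ factors, so of degree $2p$). Matching with $\gamma\in\Lambda^{k-1}\mv$ forces $k-1=2p$. If $k$ is even no such power exists, whence $\gamma=0$ and $\mathcal{K}^k(\mn,g)=0$. If $k$ is odd, then $\gamma=c\,A\wedge\cdots\wedge A$ with $p=(k-1)/2$ factors and $\alpha=c\,z_1\wedge A\wedge\cdots\wedge A$. As $A$ is non-degenerate, $\dim\mv$ is even and $A^{\wedge p}\ne0$ exactly when $2p\le\dim\mv=\dim\mn-1$, i.e.\ when $k\le\dim\mn$; in that range $\mathcal{K}^k(\mn,g)$ is the asserted one-dimensional line, genuinely non-zero, while for odd $k>\dim\mn$ there are no $k$-forms at all. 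This is the complete reduction.

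The real work, and the step I expect to be the main obstacle, is the linear algebra statement itself. I would prove it as follows. The defining equation is $\mathrm{GL}(\mv)$-covariant, so one may assume $\omega$ is the standard symplectic form and bring in the Lefschetz $\mathfrak{sl}_2$-triple on $\Lambda^\bullet\mv$ generated by $L=\omega\wedge\cdot$ and its adjoint $\Lambda$. A short equivariance check shows that the solution space is invariant under $\mathrm{Sp}(\mv,\omega)$; since the $\mathrm{Sp}$-invariants of $\Lambda^\bullet\mv$ are precisely the span of the powers of $\omega$, it suffices to rule out any non-trivial isotypic component in a solution. To this end I would polarize the equation (replacing $x$ by $x+y$) into $(x\lrcorner\ \omega)\wedge(y\lrcorner\ \gamma)+(y\lrcorner\ \omega)\wedge(x\lrcorner\ \gamma)=0$, decompose $\gamma=\sum_p L^p\gamma_p$ into primitive parts via the Lefschetz decomposition, and show by downward induction on degree that every primitive component of positive degree is killed by the polarized identity; testing $x,y$ in distinct symplectic planes already forces this, as one checks directly on $\mv=\R^4$. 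What survives is the scalar primitive part, so $\gamma\in\mathrm{span}\{1,\omega,\dots,\omega^{\dim\mv/2}\}$. This representation-theoretic bookkeeping is the technical heart; everything else is the formal reduction above.
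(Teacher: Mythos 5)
Your reduction of the theorem to the exterior-algebra statement is exactly the paper's: Proposition \ref{pro:Ag} gives $\alpha=z_1\wedge\gamma$ with $Ax\wedge(x\lrcorner\ \gamma)=0$, the identification $x\lrcorner\ A=Ax$ together with the invertibility of $A$ (Lemma \ref{lm:imjzv}) turns this into the linear-algebra problem of the introduction with $\omega=A$, and your parity and dimension bookkeeping at the end is correct. Where you genuinely diverge is in the proof of that linear-algebra lemma. The paper's argument is elementary and inductive: polarize, substitute $y=A^{-1}e_i$, contract with $e_i$ and sum to get $x\lrcorner\ \gamma=Ax\wedge\delta$ with $\delta:=\frac1{n+3-k}\sum_i e_i\lrcorner\ A^{-1}e_i\lrcorner\ \gamma$, hence $\gamma=\frac2{k-1}A\wedge\delta$, and then check that $\delta$ satisfies the same equation two degrees lower, closing the induction. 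Your route via the Lefschetz $\mathfrak{sl}_2$ and ${\rm Sp}(\mv,\omega)$-equivariance is structurally sound: the solution space is a subrepresentation, the decomposition $\Lambda^{j}\mv=\bigoplus_p L^pP^{j-2p}$ into irreducibles is multiplicity-free, so it suffices to exhibit in each component with $j-2p>0$ a single form violating the equation.

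That exhibition, however, is precisely the step you do not carry out, and as written it is a genuine gap. You assert that "testing $x,y$ in distinct symplectic planes already forces this, as one checks directly on $\mv=\R^4$." A check on $\R^4$ only sees primitive forms of degree at most $2$, so it cannot establish the claim for the components $L^pP^{d}$ with $d\ge3$ that appear as soon as $\dim\mv\ge6$; moreover the "downward induction on degree" is never formulated (no inductive hypothesis, no mechanism by which killing the top primitive component constrains the lower ones). The gap is fillable with an explicit computation: in a symplectic basis with dual coframe $e_1,\ldots,e_m,f_1,\ldots,f_m$ and $\omega=\sum_ie_i\wedge f_i$, take $\gamma=\omega^p\wedge e_1\wedge\ldots\wedge e_d$ (a nonzero element of $L^pP^d$ for $d\ge1$ in the admissible range) and $x$ the basis vector with $x\lrcorner\ \omega=f_1$ and $x\lrcorner\ e_1=1$; then $(x\lrcorner\ \omega)\wedge(x\lrcorner\ \gamma)=\omega^p\wedge f_1\wedge e_2\wedge\ldots\wedge e_d\ne0$ because $f_1\wedge e_2\wedge\ldots\wedge e_d$ is again primitive of degree $d$. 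Some such verification must be included for your argument to be a proof; once it is, your approach works but is considerably heavier than the paper's two-line descent $\gamma\mapsto\delta$.
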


\begin{proof} The form $\gamma:=A \wedge \ldots\wedge A$ clearly satisfies \eqref{eq:Ag}, so $z_1\wedge\gamma$ is Killing by Proposition \ref{pro:Ag}. 

Conversely, let $\alpha$ be a Killing $k-$form on $\mn$. By Proposition \ref{pro:Ag}, $\alpha=z_1\wedge \gamma$ where $\gamma\in \Lambda^{k-1}\mv$ verifies \eqref{eq:Ag}. 

We will prove by induction on $k$ that if a form $\gamma\in \Lambda^{k-1}\mv$ verifies \eqref{eq:Ag}, then $\gamma$ is a constant multiple of $A \wedge \ldots\wedge A$ (in particular $\gamma=0$ if $k$ is even).

For $k=1$ there is nothing to prove since $\gamma$ is just a constant, and for $k=2$ the claim follows from Remark \ref{rem:g1}. Assume now that $ k\geq 3$.

Polarizing \eqref{eq:Ag} we get
\begin{equation}\label{eq:Ag1}
 Ay\wedge (x\lrcorner \ \gamma)+Ax\wedge (y\lrcorner \ \gamma)=0,\quad \mbox{ for every  }x,y\in \mv.
\end{equation}
Considering an orthonormal basis $\{e_1, \ldots, e_n\}$ of $\mv$, and taking $y=A^{-1}e_i$ in \eqref{eq:Ag1} yields
$$e_i\wedge (x\lrcorner \ \gamma)+Ax\wedge(A^{-1}e_i\lrcorner \ \gamma)=0, \quad \mbox{ for every }x\in \mv, \;i=1, \ldots, n.$$ Taking the interior product with $e_i$ and summing over $i$ we get,
$$(n+3-k) (x\lrcorner \ \gamma) = Ax\wedge\sum_{i=1}^n  (e_i\lrcorner\ A^{-1}e_i\lrcorner\ \gamma),\quad \mbox{  for every }x\in \mv.$$
Define 
\begin{equation}\label{delta}\delta:=\frac1{n+3-k}\sum_{i=1}^n  e_i\lrcorner\ A^{-1}e_i\lrcorner\ \gamma,\end{equation} 
so that the equation above reads $x\lrcorner \ \gamma = Ax\wedge \delta$, for all $x\in \mv$. Using this equality, we obtain
\begin{equation}\label{gamma}\gamma=\frac1{k-1}\sum_{i=1}^ne_i \wedge (e_i\lrcorner\ \gamma)=\frac1{k-1}\sum_{i=1}^ne_i \wedge Ae_i\wedge \delta=\frac2{k-1}A\wedge\delta,\end{equation}
with $\delta \in \Lambda^{k-3}\mv$ defined by \eqref{delta}. Notice that for every $x\in \mv$  we have
\begin{equation*}
Ax\wedge (x\lrcorner\ \delta) = -x\lrcorner \ (Ax\wedge \delta)=-x\lrcorner\ (x\lrcorner\ \gamma) = 0.
\end{equation*}
This means that the $(k-3)-$form $\delta$ satisfies \eqref{eq:Ag}, so by the induction hypothesis, $\delta$ is a constant multiple of some exterior power of the $2-$form $A$ if $k$ is odd, and vanishes if $k$ is even. By \eqref{gamma}, the same holds for $\gamma$, thus finishing the proof.
\end{proof}

\begin{remark}
When the skew-symmetric endomorphism $A\in \so(\mv)$ defining the Lie algebra structure on $\mn=\mv\oplus \mz$ verifies $A^2=-{\rm Id_{\mv}}$, then the corresponding Riemannian metric on the Heisenberg Lie group admits a Sasakian structure whose associated Killing forms are the ones appearing in Theorem \ref{aa} (see also \cite{Se03}).

When $A^2\neq -{\rm Id_{\mv}}$, the corresponding Heisenberg manifold is not Sasakian but still carries non-trivial Killing forms of every odd degree.
\end{remark}

\section{Killing forms on $2-$step nilpotent Lie groups with two-dimensional center}

In this section we assume that the center of the 2--step nilpotent Lie algebra $\mn$ is two-dimensional.  Let $\{z_1,z_2\}$ be an orthonormal basis of $\mz$ and denote $A_i:=j(z_i)\in \so(\mv)$, for $i=1,2$. As in the previous section, we may identify the endomorphisms $A_i$ with their corresponding $2-$forms.

Let $\alpha$ be a $k-$form on $\mn$, then taking into account that $\dim \mz=2$ and using  \eqref{eq:lambdad}, we can write $\alpha=\alpha_{k}+\alpha_{k-1}+\alpha_{k-2}$. Because of Remark \ref{rem:evodd}, $\alpha$ defines a left-invariant Killing form if and only if both $\alpha^{\rm ev}=\alpha_{k}+\alpha_{k-2}$ and $\alpha^{\rm odd}=\alpha_{k-1}$ define Killing forms. 

\begin{pro}\label{k-1} Suppose that $(\mn,g)$ is irreducible and let $\alpha$ be a Killing $k-$form. If  $k\ge 4$, then $\alpha_{k-1}=0$.
\end{pro}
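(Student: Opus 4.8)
The plan is to reduce the statement to a self-contained system of linear-algebraic identities satisfied by the components of $\alpha_{k-1}$, and then to show that this system has only the trivial solution when $k\ge 4$.

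First I would isolate the odd part. By Remark \ref{rem:evodd}, $\alpha$ is Killing if and only if $\alpha^{\rm ev}=\alpha_k+\alpha_{k-2}$ and $\alpha^{\rm odd}=\alpha_{k-1}$ are each Killing; hence it suffices to prove that a Killing $k$-form $\beta$ whose only possibly non-zero homogeneous component is $\beta_{k-1}$ must vanish. Writing $\beta=z_1\wedge\gamma_1+z_2\wedge\gamma_2$ with $\gamma_1,\gamma_2\in\Lambda^{k-1}\mv$ (so that $z_i\lrcorner\beta=\gamma_i$), the claim $\alpha_{k-1}=0$ becomes $\gamma_1=\gamma_2=0$.

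Next I would extract the constraints on $(\gamma_1,\gamma_2)$. Since $\beta_{k-3}=0$ and $k\ge 4$, Corollary \ref{cor:l1} applies with $l=k-2$: it yields $j(z)_*\beta=0$ for all $z\in\mz$ together with equation \eqref{eq:i2eq}. As $j(z)$ annihilates $\mz$ and acts as a derivation, one has $j(z_i)_*\beta=z_1\wedge (A_i)_*\gamma_1+z_2\wedge (A_i)_*\gamma_2$, so the first identity gives
\[
(A_i)_*\gamma_j=0,\qquad i,j\in\{1,2\}.
\]
A short computation turns \eqref{eq:i2eq} into $z_1\wedge z_2\wedge(A_2x\lrcorner\gamma_1-A_1x\lrcorner\gamma_2)=0$, i.e. the cross relation
\[
A_2x\lrcorner\gamma_1=A_1x\lrcorner\gamma_2\qquad\mbox{for all }x\in\mv,
\]
while \eqref{eq:i1}, evaluated on $\beta$ (using $x\lrcorner z_t\lrcorner\beta=x\lrcorner\gamma_t$ and $j(z_t)=A_t$), becomes the quadratic identity
\[
A_1x\wedge(x\lrcorner\gamma_1)+A_2x\wedge(x\lrcorner\gamma_2)=0\qquad\mbox{for all }x\in\mv.
\]

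Finally, the crux is the purely linear-algebraic statement that these three families of relations, together with the non-degeneracy $\ker A_1\cap\ker A_2=\{0\}$ from Lemma \ref{lm:imjzv} and the irreducibility of $(\mn,g)$, force $\gamma_1=\gamma_2=0$ as soon as $k-1\ge 3$. My approach here would imitate the inductive scheme of Theorem \ref{aa}: polarize the quadratic identity, contract it against an orthonormal basis after ``dividing'' by the $A_i$, and thereby produce a pair of $(k-3)$-forms satisfying the same system, the cross relation serving to keep $\gamma_1$ and $\gamma_2$ coupled. I expect this to be the main obstacle, for two reasons. Unlike the one-dimensional case, neither $A_1$ nor $A_2$ need be invertible --- indeed $\mv$ may be odd-dimensional --- so the clean substitution $y=A^{-1}e_i$ is unavailable and one must instead exploit $\im A_1+\im A_2=\mv$ while controlling the kernels. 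Moreover the degree hypothesis is essential: for $k=3$ the same relations admit non-zero solutions (the naturally reductive examples of \cite{dBM2}), so irreducibility must be invoked precisely to exclude the ``power-of-$A$'' type solutions that survive in degree two. Isolating and proving this coupled rigidity statement --- the analogue, for a two-dimensional centre, of the mechanism behind Theorem \ref{aa} --- is where the real work lies; once it is in hand, $\beta=0$ and hence $\alpha_{k-1}=0$.
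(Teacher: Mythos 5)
Your reduction is correct and matches the paper's: writing $\alpha_{k-1}=z_1\wedge\gamma_1+z_2\wedge\gamma_2$, Corollary \ref{cor:l1} applied with $l=k-2$ gives $(A_i)_*\gamma_j=0$ and the cross relation $A_1x\lrcorner\ \gamma_2=A_2x\lrcorner\ \gamma_1$, and \eqref{eq:i1} gives the quadratic identity; these are exactly \eqref{eq:1}--\eqref{eq:3} in the paper. But everything after that point is a statement of intent rather than a proof, and the intended strategy is not the one that works. The step you defer --- showing that this coupled system forces $\gamma_1=\gamma_2=0$ when $k\ge4$ --- \emph{is} the proposition; you yourself flag that the inductive ``divide by $A^{-1}$'' scheme of Theorem \ref{aa} breaks down because neither $A_1$ nor $A_2$ need be invertible, and you offer no substitute. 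There is also no indication of where the hypothesis $k\ge 4$ (as opposed to $k\ge3$) would enter your induction, nor of how irreducibility would be used beyond a vague appeal.

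For comparison, the paper's argument proceeds quite differently. First, combining the cross relation with $(A_i)_*\gamma_j=0$ and with the polarized quadratic identity, one shows $[A_1,A_2]x\lrcorner\ \gamma_j=0$ and $[A_1,A_2]x\wedge\gamma_j=0$, whence $[A_1,A_2]=0$ (here the assumption that $\gamma_1,\gamma_2$ are not both zero is used). Commutativity then allows a rotation in $\mz$ making $\ker A_1\ne 0$, and one works with the $A_2$-invariant splitting $\mv=\ker A_1\oplus\Im A_1$: the cross relation forces $\gamma_1\in\Lambda^{k-1}(\Im A_1)$ and $\gamma_2\in\Lambda^{k-1}(\Im A_1)\oplus\Lambda^{k-1}(\ker A_1)$, and in the identity $A_2x\wedge(y\lrcorner\ \gamma_2)+A_2y\wedge(x\lrcorner\ \gamma_2)=0$ (for $x\in\ker A_1$, $y\in\Im A_1$) the two terms lie in the distinct summands $\ker A_1\otimes\Lambda^{k-2}(\Im A_1)$ and $\Im A_1\otimes\Lambda^{k-2}(\ker A_1)$ --- this is precisely where $k\ge4$ is needed --- so each vanishes separately, yielding $\gamma_2=0$ and then $\gamma_1=0$; irreducibility enters to guarantee $A_2|_{\Im A_1}\ne0$. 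None of these ideas (the commutativity of $A_1,A_2$, the rotation in the center, the kernel/image splitting, the degree-counting) appears in your sketch, so the proposal as it stands has a genuine gap at its central step.
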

\begin{proof}
Since $\dim \mz=2$,  we can write $\alpha_{k-1}=z_1\wedge \gamma_1+z_2\wedge \gamma_2$ where $\gamma_1,\gamma_2\in\Lambda^{k-1}\mv$, which we assume not to be both zero.
If $\alpha$ is a Killing $k-$form then $\alpha_{k-1}$ is a Killing form and  Corollary \ref{cor:l1} implies that the above forms verify
\begin{eqnarray}
\label{eq:1} A_{i*}\gamma_j&=&0,\quad \mbox{ for all }i,j\in\{1,2\},\\
A_1x \lrcorner\ \gamma_2&=&A_2x \lrcorner\ \gamma_1, \quad   \mbox{ for all }x\in \mv.
\label{eq:2}
\end{eqnarray}
Moreover, polarizing \eqref{eq:i1} we obtain for every $x,y\in\mv$:
\begin{equation}
\label{eq:3}
A_1x\wedge (y \lrcorner\ \gamma_1) +A_1y\wedge (x \lrcorner\ \gamma_1)  + A_2 x\wedge (y \lrcorner\ \gamma_2)+A_2 y\wedge (x \lrcorner\ \gamma_2 )=0,\end{equation} 

We shall prove that these equations imply $[A_1,A_2]=0$, using the fact that $\gamma_1$ and $\gamma_2$ are not simultaneously zero. On the one hand, we apply $A_{1*}$ in \eqref{eq:2}, and using \eqref{eq:1} we obtain $(A_1^2x)\lrcorner\ \gamma_2=(A_1A_2x)\lrcorner\ \gamma_1$. In addition, replacing $x$ by $A_1x$ in \eqref{eq:2} gives $(A_1^2x)\lrcorner\ \gamma_2=(A_2A_1x)\lrcorner\ \gamma_1$ so these two combined  imply $[A_1,A_2]x\lrcorner\ \gamma_1=0$ for all $x\in \mv$. Similarly, 
\begin{equation}
\label{eq:a1}[A_1,A_2]x\lrcorner\ \gamma_2=0\qquad\mbox{ for all }x\in \mv. \end{equation}

On the other hand, the wedge product of  $A_1x$ and  equation \eqref{eq:3} reads
\begin{equation*}
A_1x\wedge A_1y\wedge  (x \lrcorner\ \gamma_1)  + A_1x\wedge A_2 x\wedge (y \lrcorner\ \gamma_2)+A_1x\wedge A_2 y\wedge (x \lrcorner\ \gamma_2) =0.
\end{equation*} 

Take $x=e_i$ and sum over $i$ this expression. Using \eqref{eq:1} and the fact that the skew-symmetric endomorphism corresponding to the $2$--form $\sum_i A_1e_i\wedge A_2 e_i$ is the commutator $[A_1,A_2]$, 
 we obtain that for every $y\in \mv$,
$[A_1,A_2]\wedge (y\lrcorner\ \gamma_2)=0$.

Taking in this equality the wedge product with $y$ for $y=e_i$ and summing over $i$ yields $[A_1,A_2]\wedge \gamma_2=0$. Consequently, for any $x\in \mv$ we have $$0=x\lrcorner\ \left([A_1,A_2]\wedge \gamma_2\right)=[A_1,A_2]x\wedge \gamma_2+[A_1,A_2]\wedge (x\lrcorner\ \gamma_2)=[A_1,A_2]x\wedge \gamma_2.$$ 

Contracting the last term with $[A_1,A_2]x$ and using \eqref{eq:a1} we get $|[A_1,A_2]x|^2\gamma_2=0$ for every $x\in \mv$. Similarly, $|[A_1,A_2]x|^2\gamma_1=0$. Since $\gamma_1$ and $\gamma_2$ are not simultaneously zero, we get $[A_1,A_2]=0$ as claimed.

Using the commutation of $A_1$ and $A_2$ we can suppose, after a rotation in $\mz$, that $\ker A_1\neq 0$. Indeed, there is an orthonormal basis of $\mv$ such that $A_1^2=diag(-a_1^2, \ldots, -a_n^2)$, $A_2^2=diag(-b_1^2, \ldots, -b_n^2)$ with $a_i,b_i\geq 0$. If $a_i=0$ for some $i$, then $A_1$ is already singular; if this is not the case but $b_i=0$ for some $i$ then we just interchange $A_1$ with $A_2$. Otherwise, let $t=\arctan (-a_1/b_1)$ and take $z_1'=\cos(t) z_1+\sin(t) z_2$, $z_2'=-\sin(t) z_1+\cos(t) z_2$. Then $\{z_1',z_2'\}$ is an orthonormal basis of $\mz$ and $A_i'=j(z_i')$ has a kernel of dimension at least two.

Assume from now on that $\ker A_1\neq 0$ and consider the orthogonal decomposition $\mv=\ker A_1\oplus {\rm Im }\ A_1$. Since $[A_1,A_2]=0$,  $A_2$ preserves this decomposition. Moreover, $A_2|_{{\rm Im}\ A_1}$ is non-zero since otherwise this would imply that $\R z_1\oplus{\rm Im }\ A_1$ and $\R z_2\oplus\ker A_1$ would be orthogonal ideals of $\mn$, contradicting the irreducibility hypothesis.

By \eqref{eq:2} we know that for any $x\in \ker A_1$, $A_2x\lrcorner\ \gamma_1=0$ which, together with the fact that $A_2|_{\ker A_1}$ is invertible, implies 
\begin{equation}
\label{eq:g1}
\gamma_1\in\Lambda^{k-1}({\rm Im }\ A_1).
\end{equation}

Let $y=A_1x\in {\rm Im}\ A_1$ with $x\in \mv$ arbitrary. From \eqref{eq:2} we have $y\lrcorner\  \gamma_2=A_2x\lrcorner\ \gamma_1$ and, by \eqref{eq:g1}, we know that $y\lrcorner\  \gamma_2\in \Lambda^{k-2}({\rm Im }\ A_1)$, therefore 
\begin{equation}
\label{eq:g2}
\gamma_2\in \Lambda^{k-1}({\rm Im }\ A_1)\oplus \Lambda^{k-1}(\ker A_1).
\end{equation} We shall prove that actually, $\gamma_2\in \Lambda^{k-1}(\ker A_1)$. Indeed, given $x\in \ker A_1$  and $y\in {\rm Im }\ A_1$, \eqref{eq:2} implies
$x\lrcorner\ \gamma_1=0$, so \eqref{eq:3} simplifies as
\begin{equation}\label{eq:4} 
A_2 x\wedge (y \lrcorner\ \gamma_2)+A_2 y\wedge (x \lrcorner\ \gamma_2) =0.
\end{equation}
By \eqref{eq:g2}, the first term in this equation belongs to  $\ker A_1\otimes \Lambda^{k-2}({\rm Im }\ A_1)$ and the second term belongs to $ {\rm Im }\ A_1\otimes \Lambda^{k-2}(\ker A_1)$. Since $k\geq 4$, both terms must vanish. 

In particular we proved that $A_2 x\wedge (y\lrcorner\ \gamma_2)=0$ for every $x\in \ker A_1$ and $y\in {\rm Im} A_1$. Recall that  $A_2x\neq 0$ for every $0\neq x\in \ker A_1$ (by Lemma \ref{lm:imjzv}). These two facts account to $y\lrcorner\ \gamma_2=0$ for every $y\in {\rm Im}\ A_1$ and thus, by \eqref{eq:g2}, $\gamma_2\in \Lambda^{k-1}(\ker A_1)$ as claimed.

Let $y\in {\rm Im}\ A_1$ be such that $A_2y\neq 0$, and let $x\in \ker A_1$. Then $A_2y\in {\rm Im}\ A_1$ and $x\lrcorner\ \gamma_2\in \Lambda^{k-2}(\ker A_1)$ and, since each term in \eqref{eq:4} vanishes, we have $A_2y\wedge (x\lrcorner\ \gamma_2)=0$. Therefore $x\lrcorner\ \gamma_2=0$ for every $x\in\ker A_1$, which implies $\gamma_2=0$.

We have shown that $\gamma_2=0$ so that $\alpha_{k-1}=z_1\wedge \gamma_1$. If moreover, $\ker A_2\neq 0$, then the same reasoning as above gives $\gamma_1=0$. In the case where $A_2$ is non-singular, we proceed as before and by using a rotation, we make $A_2$ a singular endomorphism. Indeed, we take $z_1'=-\sin(t) z_2+\cos(t) z_1$, $z_2'=\cos(t)z_2+\sin(t)z_1$ with $t=\arctan(-b_i/a_i)$ for some $i$ such that $a_i\neq 0$. In this new basis, $j(z_2')$ has a non-trivial kernel and $\alpha_{k-1}=\cos(t)z_1'\wedge \gamma_1+\sin(t)z_2'\wedge \gamma_1$ with $\cos(t)\neq 0$. Therefore, the argument above implies that $\gamma_1=0$, which is a contradiction.
\end{proof}

We will now prove the main result of this section. 
\begin{teo}\label{k} If $(\mn,g)$ is an irreducible $2-$step nilpotent metric Lie algebra with two-dimensional center and $4\le k\le \dim(\mn)-1$, then every left-invariant Killing $k-$form on the associated simply connected Riemannian Lie group $(N,g)$ vanishes.
\end{teo}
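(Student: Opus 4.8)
The plan is to reduce the statement to a purely linear-algebraic problem about the pair of skew-symmetric endomorphisms $A_1=j(z_1)$ and $A_2=j(z_2)$, and then to isolate and solve that problem. First I would combine Remark \ref{rem:evodd} with Proposition \ref{k-1}. Since $\dim\mz=2$, a Killing $k$-form is $\alpha=\alpha_k+\alpha_{k-1}+\alpha_{k-2}$, and by Remark \ref{rem:evodd} it is Killing if and only if its even part $\alpha^{\rm ev}=\alpha_k+\alpha_{k-2}$ and its odd part $\alpha^{\rm odd}=\alpha_{k-1}$ are both Killing. As $k\ge4$, Proposition \ref{k-1} forces $\alpha_{k-1}=0$, so it remains to show $\alpha^{\rm ev}=0$. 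Using $\dim\mz=2$ I write $\alpha_k=\beta\in\Lambda^k\mv$ and $\alpha_{k-2}=z_1\wedge z_2\wedge\sigma$ with $\sigma\in\Lambda^{k-2}\mv$, and the goal becomes $\beta=0$ and $\sigma=0$.

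Next I would feed $\alpha^{\rm ev}$ into the Killing characterization of Proposition \ref{pro:killgen}, evaluating \eqref{eq:i2} at $z=z_1$ and $z=z_2$. The only values of $l$ that are not automatically trivial are $l=k-1$ (which couples $\beta$ and $\sigma$) and $l=k-3$ (which involves only $\sigma$, since $\alpha_{k-4}=0$); for all other $l$, as well as for \eqref{eq:i1}, every component entering the equation vanishes. A short computation using the derivation identity $A_{i*}(x\lrcorner\beta)=A_ix\lrcorner\beta+x\lrcorner A_{i*}\beta$ then yields the system
\begin{align*}
&A_{1*}\sigma=0,\qquad A_{2*}\sigma=0,\\
&A_1x\lrcorner\beta-x\lrcorner A_{1*}\beta=A_2x\wedge\sigma,\qquad A_2x\lrcorner\beta-x\lrcorner A_{2*}\beta=-A_1x\wedge\sigma,
\end{align*}
valid for every $x\in\mv$. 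Wedging the two coupling relations with an orthonormal basis of $\mv$ and summing also gives the consistency relations $A_{1*}\beta=-\tfrac{2}{k+1}A_2\wedge\sigma$ and $A_{2*}\beta=\tfrac{2}{k+1}A_1\wedge\sigma$, matching Proposition \ref{ref:jzl}.

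The endpoint $k=\dim(\mn)-1$ is then immediate: here $\beta\in\Lambda^k\mv$ with $k>\dim\mv$, so $\beta=0$, and the coupling relations reduce to $A_1x\wedge\sigma=0$ and $A_2x\wedge\sigma=0$ for all $x\in\mv$. Since ${\rm Im}\,A_1+{\rm Im}\,A_2=\mv$ by Lemma \ref{lm:imjzv}, this gives $v\wedge\sigma=0$ for every $v\in\mv$, forcing $\sigma=0$. For the remaining range $4\le k\le\dim(\mn)-2$, both $\beta$ and $\sigma$ may be nonzero, and I would isolate the assertion that the displayed system admits only the trivial solution $\beta=\sigma=0$ as a standalone linear-algebra statement (Proposition \ref{p63}) about two skew-symmetric endomorphisms $A_1,A_2$ of a Euclidean space satisfying $\ker A_1\cap\ker A_2=\{0\}$ and the irreducibility condition that $\mv$ admits no common orthogonal $A_i$-invariant splitting producing an ideal decomposition of $\mn$.

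The hard part of the whole argument is exactly this last proposition. The system is genuinely coupled; $A_1$ and $A_2$ need not commute (unlike the odd-part situation of Proposition \ref{k-1}), so one cannot simply reduce to the one-variable statement behind Theorem \ref{aa}. I would attack it by extracting scalar identities from the coupling relations — for instance by contracting and wedging them against $A_iy$ and feeding the outcomes back into $A_{i*}\sigma=0$ and $A_{i*}\beta=\mp\tfrac{2}{k+1}A_{3-i}\wedge\sigma$ — and by using the degree restriction $k\le\dim(\mn)-2$ to exclude the ``volume-type'' solutions (such as $\sigma$ proportional to a top power) that genuinely do occur at the top degree $k=\dim(\mn)$. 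The irreducibility hypothesis must enter precisely to prevent $\mv$ from decomposing into a block on which the pair behaves like the one-dimensional-center case, where nontrivial $\sigma$ would survive; carrying this bookkeeping through the exterior algebra is the main obstacle.
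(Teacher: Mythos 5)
Your reduction is exactly the one the paper uses: Remark \ref{rem:evodd} together with Proposition \ref{k-1} kills $\alpha_{k-1}$, and feeding $\alpha^{\rm ev}=\beta+z_1\wedge z_2\wedge\delta$ into Proposition \ref{pro:killgen} (where indeed only $l=k-1$ and $l=k-3$ give nontrivial conditions) produces precisely the system \eqref{e0}--\eqref{e3} of the paper; your $\sigma$ is $-\delta$, which accounts for the sign discrepancy in the coupling relations, and your separate treatment of the endpoint $k=\dim(\mn)-1$ is correct but not needed, since the general statement covers $k\le 1+\dim\mv$. Up to this point the proposal is sound and follows the paper's route.

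The genuine gap is that you never prove the linear-algebra statement you isolate, and that statement is the entire mathematical content of the theorem beyond bookkeeping -- you yourself call it ``the hard part'' and then offer only the intention to ``extract scalar identities'' and ``carry the bookkeeping through.'' The paper's Proposition \ref{p63} asserts that \eqref{e0}--\eqref{e3} with $(\beta,\delta)\ne(0,0)$ force $k=2$ or $k=2+\dim\mv$, and its proof is long and structured: one first shows $[A_1,A_2]\wedge\delta\ne 0$, deduces that $\xi\wedge\delta=0$ implies $\xi=0$ (Lemma \ref{lm:delta}), introduces the Lie algebra $\mg$ generated by $A_1,A_2$ and the ideal $I=\{A\in\mg \mid A\wedge\delta=0\}$, proves $I=0$ (Lemma \ref{i}) by decomposing $\beta=A_3\wedge\delta+\beta_0$ along $\mg\wedge\delta$, and only then derives $[A_1,A_3]=\tfrac2{k+1}A_2$, $A_1A_2+A_2A_1=0$ and $A_1^2=A_2^2$, arriving at $(k-2)A_1\wedge\delta=0$. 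None of these intermediate objects ($A_3$, the ideal $I$) arise from the naive contractions you describe, so the sketch cannot be completed as stated. A secondary inaccuracy: you posit irreducibility of $(\mn,g)$ as a hypothesis of the linear-algebra proposition and predict it is what rules out surviving $\sigma$; in fact Proposition \ref{p63} uses only $\operatorname{Im}A_1+\operatorname{Im}A_2=\mv$, and irreducibility enters the proof of Theorem \ref{k} solely through Proposition \ref{k-1}, i.e.\ in eliminating the odd part $\alpha_{k-1}$ (which is indeed where nontrivial Killing forms live in the reducible case), not in the even-part argument.
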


\begin{proof}
Let $\alpha=\alpha_k+\alpha_{k-1}+\alpha_{k-2}$ be the decomposition of a Killing form according to \eqref{eq:lambdad}. By Proposition \ref{k-1}, $\alpha_{k-1}=0$. We will show that $\alpha_k+\alpha_{k-2}$, vanishes too for $k\ge 3$. 

In order to simplify the notation, we will denote $\alpha_k=:\beta\in\Lambda^k\mv$ and $\alpha_{k-2}=:z_1\wedge z_2\wedge\delta$, with $\delta\in\Lambda^{k-2}\mv$. By Lemma \ref{lm:imjzv} we have 
\begin{equation}\label{e0} \Im A_1+ \Im A_2=\mv,
\end{equation}
Moreover, since $\alpha_{k-4}=0$, Corollary \ref{cor:l1} gives
\begin{equation}\label{e1} A_{1*}\delta=A_{2*}\delta=0,
\end{equation}
Finally, \eqref{eq:i2} for $l=k-1$ is equivalent to 
\begin{equation}\label{e2} A_1x\lrcorner\ \beta=x\lrcorner\  A_{1*}\beta-A_2x\wedge\delta,\qquad\forall\ x\in\mv,
\end{equation}
\begin{equation}\label{e3} A_2x\lrcorner\ \beta=x\lrcorner\  A_{2*}\beta+A_1x\wedge\delta,\qquad\forall\ x\in\mv.
\end{equation}

The fact that $\alpha_k+\alpha_{k-2}=0$ for $k\ge 3$ is now a direct consequence of the following general result of linear algebra.
\end{proof}

\begin{pro}\label{p63} Let $2\le k$ be an integer, $(\mv,\langle\cdot,\cdot\rangle)$ a Euclidean vector space, $A_1,A_2\in \so(\mv)$, $\beta\in\Lambda^k\mv$, and $\delta\in\Lambda^{k-2}\mv$ such that \eqref{e0}--\eqref{e3} are verified. 
If $\beta$ and $\delta$ are not both zero, then $k=2$ or $k=2+\dim(\mv)$.
\end{pro}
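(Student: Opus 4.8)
The plan is to eliminate $\beta$ and reduce the whole system to conditions on $\delta$ alone. Wedging \eqref{e2} (with $x=e_j$) by $e_j$ and summing over an orthonormal basis $\{e_j\}$ of $\mv$, using the identities $\sum_j e_j\wedge(e_j\lrcorner\eta)=(\deg\eta)\eta$, $\sum_j e_j\wedge(A_1e_j\lrcorner\eta)=-A_{1*}\eta$ and $\sum_j e_j\wedge A_2e_j=2A_2$, one gets $A_{1*}\beta=\frac{2}{k+1}A_2\wedge\delta$, and symmetrically $A_{2*}\beta=-\frac{2}{k+1}A_1\wedge\delta$ from \eqref{e3}. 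Substituting these back into \eqref{e2}--\eqref{e3} and using $x\lrcorner A_i=A_ix$ expresses the contractions of $\beta$ purely through $\delta$:
\begin{equation*}(k+1)\,A_1x\lrcorner\beta=(1-k)\,A_2x\wedge\delta+2\,A_2\wedge(x\lrcorner\delta),\end{equation*}
and likewise for $A_2x\lrcorner\beta$. By \eqref{e0} every $y\in\mv$ is of the form $A_1x+A_2x'$, so all contractions $y\lrcorner\beta$—hence $\beta$ itself—are determined by $\delta$; in particular $\delta=0$ forces $A_{i*}\beta=0$ and then $A_ix\lrcorner\beta=0$ for all $x$, so $\beta=0$. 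Since $\beta$ and $\delta$ vanish for degree reasons when $k>n+2$, and $\beta=0$ automatically when $k=n+2$ (here $n=\dim\mv$), it suffices to prove $\delta=0$ whenever $3\le k\le n+1$.

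Next I would read off the constraints on $\delta$. The displayed expression for $A_1x\lrcorner\beta$ depends on $x$ only through $A_1x$, so its right-hand side must vanish for $x\in\ker A_1$; contracting that relation with the same $x=w$ collapses it to
\begin{equation*}A_2w\wedge(w\lrcorner\delta)=0\quad\text{for all }w\in\ker A_1,\end{equation*}
and symmetrically $A_1w\wedge(w\lrcorner\delta)=0$ for all $w\in\ker A_2$. To these one adjoins the given identities $A_{1*}\delta=A_{2*}\delta=0$ of \eqref{e1} and the compatibility of the two contraction formulas on $\Im A_1\cap\Im A_2$. Observe that \eqref{e0} is equivalent to $\ker A_1\cap\ker A_2=0$, so $A_2$ is injective on $\ker A_1$ and $A_1$ on $\ker A_2$; the wedge relations above are exactly of the type treated in Theorem \ref{aa}, but localized to these subspaces.

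Finally I would force $\delta=0$ in the interior range. As in the proof of Proposition \ref{k-1}, one may rotate the basis of $\mz$ so that $\ker A_1\neq0$, and decompose $\mv$ along $\ker A_1$, $\Im A_1$ and their $A_2$-refinement. On the pieces where the relevant $A_i$ restricts to an invertible skew form, the wedge conditions together with $A_{i*}\delta=0$ and the non-degenerate $2$-form lemma underlying Theorem \ref{aa} force the corresponding components of $\delta$ to be constant multiples of powers of $A_i$; a degree and parity count then shows these cannot be assembled into a nonzero form of intermediate degree $1\le\deg\delta\le n-1$. The two exceptional degrees are transparent from these conditions: for $\deg\delta=0$ they are vacuous, and for $\deg\delta=n$ they hold automatically, since $A_{i*}$ annihilates top forms and $A_2w\wedge(w\lrcorner\mathrm{vol})=\langle A_2w,w\rangle\,\mathrm{vol}=0$—which is exactly why $k=2$ and $k=n+2$ are allowed.

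The main obstacle is this last step. Unlike in Proposition \ref{k-1}, where the hypothesis $\alpha_{k-1}\neq0$ produced $[A_1,A_2]=0$, here $A_1$ and $A_2$ need not commute, so there is no simultaneous block-diagonalization and the invariant decomposition of $\mv$ must be handled by hand, tracking how the components of $\delta$ distribute over $\ker A_1$, $\Im A_1$ and their $A_2$-refinement. Carrying this through—presumably by an induction on $\dim\mv$ (or on $k$) in which one splits off a joint invariant subspace and applies the inductive hypothesis to its orthogonal complement—is the genuinely involved linear-algebra core of the statement.
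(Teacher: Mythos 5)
Your opening reductions are correct and agree with the paper: wedging \eqref{e2}--\eqref{e3} over an orthonormal basis does give $A_{1*}\beta=\tfrac2{k+1}A_2\wedge\delta$ and $A_{2*}\beta=-\tfrac2{k+1}A_1\wedge\delta$ (the paper's \eqref{e4}--\eqref{e5}), substituting back does express $A_1x\lrcorner\ \beta$ and $A_2x\lrcorner\ \beta$ through $\delta$, and $\delta=0$ does force $\beta=0$ via \eqref{e0}. The difficulty is that everything after that is a plan, not a proof, and the plan rests on conditions that can be vacuous. The relations you extract on $\delta$ live only on $\ker A_1$ and $\ker A_2$; but nothing in the hypotheses prevents $A_1$ and $A_2$ from both being invertible, in which case your wedge conditions say nothing at all. (In fact the paper's proof shows that in the end one is driven precisely to the situation where $A_1,A_2$ are invertible, anticommute, and satisfy $A_1^2=A_2^2$ -- a quaternionic-type configuration with trivial kernels, which your decomposition along $\ker A_1$ and $\Im A_1$ cannot see.) Moreover, even when $\ker A_1\neq 0$, the operator $A_2$ need not preserve $\ker A_1$ (you cannot assume $[A_1,A_2]=0$ here, as you note yourself), so the lemma underlying Theorem \ref{aa} does not apply on that subspace. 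The "compatibility of the two contraction formulas on $\Im A_1\cap\Im A_2$", which is where the real constraints are hiding, is mentioned but never exploited. You acknowledge this openly in your last paragraph: the "genuinely involved linear-algebra core" is exactly the part left undone.

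For comparison, the paper closes this gap by a quite different mechanism. It first shows $[A_1,A_2]\wedge\delta\neq0$ (equation \eqref{e8}), hence that $\xi\mapsto\xi\wedge\delta$ is injective on $\mv$ (Lemma \ref{lm:delta}); it then introduces the Lie algebra $\mg\subset\so(\mv)$ generated by $A_1,A_2$ with its ad-invariant metric, shows that the ideal $I=\{A\in\mg\ :\ A\wedge\delta=0\}$ vanishes (Lemma \ref{i}), and decomposes $\beta=A_3\wedge\delta+\beta_0$ to obtain the bracket relations \eqref{a4}. Iterating the derivations $A_{1*},A_{2*}$ on the contraction identities then yields $A_1A_2+A_2A_1=0$, $A_1^2=A_2^2$, invertibility of both, and finally $(k-2)A_1\wedge\delta=0$, which is what actually singles out $k=2$. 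None of this structure (the auxiliary $A_3$, the ideal argument, the anticommutation) is present or foreseeable in your sketch, so the proof as proposed does not go through.
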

\begin{proof} If $k\ge 2+\dim(\mv)$ there is nothing to prove, so we can assume for the rest of the proof that $k\le1+\dim(\mv)$.

Let $\{e_1, \ldots, e_n\}$ be an orthonormal basis of $\mv$. Taking the wedge product with $x$ in \eqref{e2} and summing over the orthonormal basis with $x=e_i$ yields $-A_{1*}\beta=kA_{1*}\beta-2A_2\wedge\delta$, whence 
\begin{equation}\label{e4} A_{1*}\beta=\frac2{k+1}A_2\wedge\delta.
\end{equation} Similarly, using \eqref{e3} we get 
\begin{equation}\label{e5} A_{2*}\beta=-\frac2{k+1}A_1\wedge\delta.
\end{equation}

Notice that the two formulas above also follow from \eqref{eq:i3}.  Using them, \eqref{e2}--\eqref{e3} become
\begin{equation}\label{e6} (A_1x)\lrcorner\ \beta=\frac2{k+1}x\lrcorner\ (A_2\wedge\delta)-(A_2x)\wedge\delta,\qquad\forall\ x\in\mv,
\end{equation}

\begin{equation}\label{e7} (A_2x)\lrcorner\ \beta=-\frac2{k+1}x\lrcorner\ (A_1\wedge\delta)+(A_1x)\wedge\delta,\qquad\forall\ x\in\mv.
\end{equation}

We claim that 
\begin{equation}\label{e8}[A_1,A_2]\wedge\delta\ne 0.
\end{equation}
Indeed, assuming for a contradiction that $[A_1,A_2]\wedge\delta=0$, and applying $A_{1*}$ to \eqref{e4} yields $A_{1*}A_{1*}\beta=0$, so taking the scalar product with $\beta$ and using the fact that $A_{1*}$ is skew-symmetric on $\Lambda^k\mv$ yields $A_{1*}\beta=0$. Similarly, applying $A_{2*}$ to \eqref{e5} gives $A_{2*}\beta=0$. Using \eqref{e2} we compute in $e_i$:
\begin{eqnarray*}
\sum_{i=1}^n|(A_1e_i)\lrcorner\ \beta|^2&=&-\sum_{i=1}^n\langle (A_1e_i)\lrcorner\ \beta,(A_2e_i)\wedge\delta\rangle=-\sum_{i=1}^n\langle \beta,(A_1e_i)\wedge (A_2e_i)\wedge\delta\rangle\\
&=&\langle \beta,[A_1,A_2]\wedge\delta\rangle=0,
\end{eqnarray*}
thus showing that $(A_1x)\lrcorner\  \beta=0$ for every $x\in\mv$. Similarly, using \eqref{e3} we would obtain $(A_2x)\lrcorner\  \beta=0$ for every $x\in\mv$. From \eqref{e0} this would imply $\beta=0$; by using again \eqref{e2}--\eqref{e3} and \eqref{e0} we also get $x\wedge \delta=0$ for every $x\in\mv$. Since the degree of $\delta$ is $k-2\le\dim(\mv)-1$, this would imply $\delta=0$, thus contradicting our hypothesis that $\beta$ and $\delta$ are not both zero. This proves \eqref{e8}.

\begin{lm}\label{lm:delta} If $\xi\in\mv$ satisfies $\xi\wedge\delta=0$ then $\xi=0$.
\end{lm}
\begin{proof}
If $\xi\wedge\delta=0$, then for every $x\in \mv$ and every exterior form $\omega$ we have 
\begin{equation}\label{e9}\xi\wedge(x\lrcorner\ (\omega\wedge\delta))=\langle x,\xi\rangle\omega\wedge\delta.
\end{equation}
Applying $A_{1*}$ to \eqref{e6} and subtracting the same equation with $x$ replaced by $A_1x$, yields 
\begin{equation}\label{e10} A_1x\lrcorner\  A_{1*}\beta=\frac2{k+1}x\lrcorner\ ([A_1,A_2]\wedge\delta)-[A_1,A_2]x\wedge\delta,\qquad\forall\ x\in\mv.
\end{equation}
Taking the wedge product with $\xi$ in this equation and using \eqref{e4} and \eqref{e9}, we finally get
$$\langle A_1x,\xi\rangle A_2\wedge\delta=\langle x,\xi\rangle [A_1,A_2]\wedge\delta,\qquad\mbox{ for all} \ x\in\mv.$$
For $x=\xi$, the left-hand term vanishes, so $|\xi|^2[A_1,A_2]\wedge\delta=0$. Together with \eqref{e8}, this concludes the proof of the lemma.
\end{proof}

Consider now the Lie algebra $\mg\subset\so(\mv)$ generated by $A_1$ and $A_2$, endowed with the ad-invariant scalar product inherited from $\so(\mv)$. By \eqref{e1}, $A_*\delta=0$ for every $A\in \mg$ so the subspace 
$$I:=\{A\in \mg\ |\ A\wedge\delta=0\}$$
is clearly and ideal of $\mg$.

\begin{lm}\label{i} The ideal $I$ vanishes (i.e. $\wedge\delta:\Lambda^2\mv\to\Lambda^k\mv$ is injective).
\end{lm}

\begin{proof} Since the scalar product on $\mg$ is ad-invariant, the orthogonal $I^\perp$ of $I$ is also an ideal of $\mg$, and thus the elements of $I$ commute with the elements of $I^\perp$. For every $A\in\mg$ we write $A=A^I+A^\perp$, where $A^I$ and $A^\perp$ are the orthogonal projections of $A$ on $I$ and $I^\perp$. Note that, by the definition of $\mg$, $I$ is generated by $A_1^I$ and $A_2^I$.

Acting with $A_{1*}$ and $A_{2*}$ on \eqref{e4}--\eqref{e5} and using that $A_*\delta=0$ for all $A\in\mg$, we get by a straightforward inductive argument that for every $A\in \mg$ there exists $B\in \mg$ such that $A_*\beta=B\wedge\delta$. We decompose $\beta$ with respect to the orthogonal direct sum 
\begin{equation}\label{dec}\Lambda^k\mv=(\mg\wedge\delta)\oplus (\mg\wedge\delta)^\perp
\end{equation}
as $\beta=:A_3\wedge\delta+\beta_0$. By the definition of $I$, we can choose $A_3\in I^\perp$. This choice will ensure that $[A_3,A]=[A_3,A]^\perp$ for every $A\in \mg$.

The action of every element of $\mg$ clearly preserves the decomposition \eqref{dec}, we thus get $A_*\beta_0=0$ for every $A\in\mg$.

Equations \eqref{e4}--\eqref{e5} then become
$$ A_{1*}(A_3\wedge\delta)=\frac2{k+1}A_2\wedge\delta,\qquad A_{2*}(A_3\wedge\delta)=-\frac2{k+1}A_1\wedge\delta,$$
which also read
\begin{equation}\label{a3}[A_1,A_3]=[A_1,A_3]^\perp=\frac2{k+1}A_2^\perp,\qquad [A_2,A_3]=[A_2,A_3]^\perp=-\frac2{k+1}A_1^\perp.
\end{equation}

Now, using the fact that $A_{3*}\beta=0$, acting on \eqref{e6}--\eqref{e7} with $A_{3*}$, subtracting the same equations with $x$ replaced by $A_3x$ and using \eqref{a3}, yields 
\begin{equation}\label{e16} -A^\perp_2x\lrcorner\ \beta=\frac2{k+1}x\lrcorner\ (A^\perp_1\wedge\delta)-A^\perp_1x\wedge\delta,\qquad\mbox{ for all}\ x\in\mv,
\end{equation}

\begin{equation}\label{e17} A^\perp_1x\lrcorner\ \beta=\frac2{k+1}x\lrcorner\ (A^\perp_2\wedge\delta)-A^\perp_2x\wedge\delta,\qquad\mbox{ for all}\ x\in\mv.
\end{equation}

Making the sum of \eqref{e7} and \eqref{e16} and the difference between \eqref{e6} and \eqref{e17}, and using the fact that $A^I_1\wedge\delta=A^I_2\wedge\delta=0$, we obtain
\begin{equation}\label{e26} A^I_1x\lrcorner\ \beta=-A^I_2x\wedge\delta,\qquad\mbox{ for all}\ x\in\mv,
\end{equation}

\begin{equation}\label{e27} A^I_2x\lrcorner\ \beta=A^I_1x\wedge\delta,\qquad\mbox{ for all}\ x\in\mv.
\end{equation}

Since  $A_3$ is in $I^\perp$, it commutes with $A^I_1$, so $A_{1*}^I\beta=A_{1*}^I(A_3\wedge\delta)=0$. Applying $A_{1*}^I$ to \eqref{e26} and subtracting the same equation with $x$ replaced by $A_1x$, we thus get
\begin{equation}\label{e36} [A_1^I,A^I_2]x\wedge\delta=\frac2{k+1}x\lrcorner\ ([A_1^I,A^I_2]\wedge\delta),\qquad\mbox{ for all}\ x\in\mv.
\end{equation}
On the other hand, $[A_1^I,A^I_2]\wedge\delta=0$ by the definition of $I$, so Lemma \ref{lm:delta} gives
\begin{equation}\label{ai}[A_1^I,A^I_2]=0.
\end{equation}

We now replace $x$ by $-A_2^Ix$ in \eqref{e26}, and by $A_1^Ix$ in \eqref{e27}. The sum of the resulting equation reads
$$(-A^I_1A^I_2x+A^I_2A^I_1x)\lrcorner\ \beta=((A^I_2)^2x+(A^I_1)^2x)\wedge\delta,\qquad\mbox{ for all}\ x\in\mv.$$
The left-hand term vanishes by \eqref{ai}, so by Lemma \ref{lm:delta} again we get $(A^I_2)^2+(A^I_1)^2=0$. Since $A_1^I$ and $A_2^I$ are skew-symmetric, this shows that they both vanish, so $I$, which is generated by $A_1^I$ and $A_2^I$, has to vanish too.
\end{proof}

Since $I=0$, \eqref{a3} becomes 
\begin{equation}\label{a4}[A_1,A_3]=\frac2{k+1}A_2,\qquad [A_2,A_3]=-\frac2{k+1}A_1.
\end{equation}

Applying $A_{2*}$ to \eqref{e6}, subtracting \eqref{e6} with $x$ replaced by $A_2x$, and using \eqref{e5}, yields 
\begin{equation}\label{e41}[A_2,A_1]x\lrcorner\ \beta- \frac2{k+1}A_1x\lrcorner\  (A_1\wedge\delta)=0,\qquad\mbox{ for all}\ x\in\mv.
\end{equation}
Similarly, applying $A_{1*}$ to \eqref{e7}, subtracting \eqref{e7} with $x$ replaced by $A_1x$, and using \eqref{e4}, we obtain
\begin{equation}\label{e42}[A_1,A_2]x\lrcorner\ \beta+ \frac2{k+1}A_2x\lrcorner\  (A_2\wedge\delta)=0,\qquad\mbox{ for all}\ x\in\mv,
\end{equation}
whence 
\begin{equation}\label{e43}A_1x\lrcorner\  (A_1\wedge\delta)=A_2x\lrcorner\  (A_2\wedge\delta)=0,\qquad\mbox{ for all}\ x\in\mv.
\end{equation}

We replace $x$ by $A_2x$ in \eqref{e6}, and by $A_1x$ in \eqref{e7}. The sum and the difference of the resulting equations read
\begin{eqnarray*}(A_1A_2x+A_2A_1x)\lrcorner\ \beta&=&\frac2{k+1}A_2x\lrcorner\  (A_2\wedge\delta)- \frac2{k+1}A_1x\lrcorner\  (A_1\wedge\delta)\\&&+((A_1)^2x-(A_2)^2x)\wedge\delta,\qquad\mbox{ for all}\ x\in\mv,
\end{eqnarray*}
\begin{eqnarray*}(A_1A_2x-A_2A_1x)\lrcorner\ \beta&=&\frac2{k+1}A_2x\lrcorner\  (A_2\wedge\delta)+\frac2{k+1}A_1x\lrcorner\  (A_1\wedge\delta)\\&&-((A_2)^2x+(A_1)^2x)\wedge\delta,\qquad\mbox{ for all}\ x\in\mv,
\end{eqnarray*}
Taking \eqref{e41}--\eqref{e42} into account, these equations can be simplified as 
\begin{equation}\label{e51}((A_1)^2x-(A_2)^2x)\wedge\delta=(A_1A_2x+A_2A_1x)\lrcorner\ \beta,\qquad\mbox{ for all}\ x\in\mv,\end{equation}
\begin{equation}\label{e52}((A_2)^2x+(A_1)^2x)\wedge\delta=\frac6{k+1}A_1x\lrcorner\  (A_1\wedge\delta)=\frac6{k+1}A_2x\lrcorner\  (A_2\wedge\delta),\qquad\mbox{ for all}\ x\in\mv.\end{equation}
Since the second term of \eqref{e52} is invariant by $A_{1*}$ and the third term is invariant by $A_{2*}$, Lemma \ref{lm:delta} shows that $(A_1)^2+(A_2)^2$ commutes with $A_1$ and $A_2$. Consequently, both $(A_1)^2$ and $(A_2)^2$ commute with $A_1$ and $A_2$, so they commute with every element of $\mg$.

In particular $A_3$ commutes with $(A_1)^2$, which by \eqref{a4} yields $A_1A_2+A_2A_1=0$. From \eqref{e51} together with Lemma \ref{lm:delta}, we obtain that $(A_1)^2=(A_2)^2$. Moreover, \eqref{e0} implies that $A_1$ and $A_2$ are both invertible. 

We now replace $x$ by $(A_1)^{-1}x$ in \eqref{e52} and obtain
$$A_1x\wedge\delta=\frac3{k+1}x\lrcorner\  (A_1\wedge\delta),\qquad\mbox{ for all}\ x\in\mv.$$

Taking the wedge product with $x$ in this last equation and summing over $i$ with $x=e_i$ yields
$$2A_1\wedge\delta=\frac{3k}{k+1}A_1\wedge\delta,$$
whence $(k-2)A_1\wedge\delta=0$. Since $A_1\ne 0$ (e.g. by \eqref{e8}), Lemma \ref{i} implies that $k=2$, thus finishing the proof of the proposition.
\end{proof}

Using the descriptions of Killing $2-$ and $3-$forms on $2-$step nilpotent Lie groups obtained in \cite{dBM2}, we can summarize the above arguments in the following classification result:
\begin{teo}
Let $(\mn,g)$ be a $2-$step nilpotent metric Lie algebra with two-dimensional center $\mz$. If $(\mn,g)$ is irreducible, then the space of Killing forms on $(\mn,g)$ satisfies:
\begin{itemize}
\item $\mathcal{K}^1(\mn,g)=\mz$ is two-dimensional.
\item $\mathcal{K}^2(\mn,g)$ is one-dimensional if $\mn$ has a bi-invariant $g$-orthogonal complex structure, and zero otherwise.
\item $\mathcal{K}^3(\mn,g)$ is one-dimensional if $(\mn,g)$ is naturally reductive, and zero otherwise.
\item $\mathcal{K}^k(\mn,g)=0$ for $4\le k\le \dim(\mn)-1$.
\item $\mathcal{K}^k(\mn,g)=\Lambda^k\mn$ is one-dimensional if $k=\dim(\mn)$.
\end{itemize}
If $(\mn,g)=(\mn_1,g_1)\oplus(\mn_2,g_2)$ is reducible, with $\dim(\mn_1)\le\dim(\mn_2)$, then the space of Killing forms on $(\mn,g)$ satisfies:
\begin{itemize}
\item $\mathcal{K}^1(\mn,g)=\mz$ is two-dimensional.
\item $\mathcal{K}^k(\mn,g)=0$ if $k$ is even and $2\le k<\dim(\mn)$.
\item If $k$ is odd and $3\le k\le \dim(\mn)-1$, then $\mathcal{K}^k(\mn,g)$ is two-dimensional for $k\le\dim(\mn_1)$, one-dimensional for $\dim(\mn_1)<k\le\dim(\mn_2)$ and zero for $k>\dim(\mn_2)$.
\item $\mathcal{K}^k(\mn,g)=\Lambda^k\mn$ is one-dimensional if $k=\dim(\mn)$.
\end{itemize}
\end{teo}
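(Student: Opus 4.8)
The plan is to assemble the statement from the results established earlier in the paper, splitting the argument according to the degree $k$ and according to whether $(\mn,g)$ is irreducible or reducible. Two degrees are common to both cases. For $k=\dim(\mn)$ the space $\Lambda^k\mn$ is one-dimensional, spanned by the volume form, which is parallel and hence Killing. For $k=1$ I would write an element $\xi\in\mn$ as $\xi=v+z$ with $v\in\mv$ and $z\in\mz$ and use \eqref{eq:nabla}: the contribution of $z$ to the endomorphism $x\mapsto\nabla_x\xi$ is skew-symmetric, while the symmetric part coming from $v$ is governed by the pairings $\langle j(z')x,v\rangle$. Requiring $x\mapsto\nabla_x\xi$ to be skew-symmetric (the Killing condition in degree one) forces these pairings to vanish for all $x\in\mv$ and $z'\in\mz$, hence $v\in\bigcap_{z'}\ker j(z')=0$ by Lemma \ref{lm:imjzv}; thus $\mathcal{K}^1(\mn,g)=\mz$ is two-dimensional.

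In the irreducible case the remaining degrees are already covered: the descriptions of Killing $2$- and $3$-forms recalled from \cite{dBM2} give the bi-invariant orthogonal complex structure criterion for $k=2$ and the naturally reductive criterion for $k=3$, while the vanishing for $4\le k\le\dim(\mn)-1$ is exactly Theorem \ref{k}. This settles the irreducible list entirely.

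The reducible case is where the real work lies. First I would determine the possible orthogonal decompositions into ideals. By Theorem \ref{teo:dR} the factors consist of a possibly trivial flat factor together with irreducible non-abelian nilpotent ideals, and $\mz$ is the orthogonal sum of their centers. Since $\dim(\mz)=2$, $\mn$ is non-abelian and reducible, a short count leaves only $\mn\cong\R\oplus\mh$ and $\mn\cong\mh\oplus\mh'$, where $\R$ is a Euclidean line and $\mh,\mh'$ are Heisenberg algebras (the unique irreducible $2$-step nilpotent metric Lie algebras with one-dimensional center, as in Section 5). Writing the decomposition as $\mn=\mn_1\oplus\mn_2$, Proposition \ref{p32} expresses a Killing $k$-form as $\alpha=\sum_l\alpha_l$ with $\alpha_l\in\Lambda^l\mn_1\otimes\Lambda^{k-l}\mn_2$, where $\alpha_k$ and $\alpha_0$ are Killing on $\mn_1$ and $\mn_2$ respectively and each intermediate $\alpha_l$ ($1\le l\le k-1$) is parallel on $\mn$. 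The crucial step is then to kill the mixed terms: a parallel form on a product is a sum of wedge products of parallel forms on the factors, and by Proposition \ref{pro:np} the only parallel forms on a Heisenberg factor are the constants and the multiples of its volume form, while those on the flat line live only in degrees $0$ and $1$. Hence for $2\le k<\dim(\mn)$ every intermediate $\alpha_l$ vanishes, giving the clean splitting $\mathcal{K}^k(\mn,g)=\mathcal{K}^k(\mn_1,g_1)\oplus\mathcal{K}^k(\mn_2,g_2)$.

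It then remains to read off dimensions. By Theorem \ref{aa} each Heisenberg factor contributes a one-dimensional space of Killing $k$-forms exactly when $k$ is odd with $k\le$ its dimension, and nothing otherwise, while the flat line contributes only in degrees $0$ and $1$. Summing these contributions over the two factors, and using $\dim(\mn_1)\le\dim(\mn_2)$, produces the stated count: zero for even $k$ with $2\le k<\dim(\mn)$; and for odd $k$ in the range $3\le k\le\dim(\mn)-1$, two-dimensional while $k\le\dim(\mn_1)$, one-dimensional while $\dim(\mn_1)<k\le\dim(\mn_2)$, and zero for $k>\dim(\mn_2)$; finally $k=\dim(\mn)$ again gives the one-dimensional space spanned by the volume form. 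I expect the main obstacles to be the structural classification of the reducible factors and the careful bookkeeping for the flat-line case, where low-degree parallel forms could a priori intrude but are ruled out precisely by the degree constraints.
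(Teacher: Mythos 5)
Your proposal is correct and follows essentially the same route as the paper, which presents this theorem as a summary assembled from Theorem \ref{teo:dR}, Proposition \ref{p32}, Proposition \ref{pro:np} (via Corollary \ref{deK}), Theorem \ref{aa}, Theorem \ref{k} and the $k=2,3$ descriptions cited from \cite{dBM2}. Your explicit verification of $\mathcal{K}^1(\mn,g)=\mz$ and your case analysis of the reducible decompositions ($\R\oplus\mh$ or $\mh\oplus\mh'$) fill in bookkeeping the paper leaves implicit, but introduce no new method.
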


\bibliographystyle{plain}
\bibliography{biblio}
\end{document}